\documentclass[11 pt]{amsart}
\usepackage{amsmath,amsthm,amsfonts,amssymb,amscd,amsrefs}
\usepackage[top=3cm, bottom=3cm,left=2cm, right=2cm]{geometry}
 \usepackage{enumitem}
\usepackage{centernot}
\usepackage{color,soul}
  \usepackage[linkcolor=red,citecolor=green,colorlinks=true]{hyperref}
\usepackage{mathrsfs}
\usepackage{mathtools}
\usepackage{etoolbox}
\usepackage{scalerel,stackengine}
\stackMath
\newcommand\widecheck[1]{%
\savestack{\tmpbox}{\stretchto{%
  \scaleto{%
    \scalerel*[\widthof{\ensuremath{#1}}]{\kern-.6pt\bigwedge\kern-.6pt}%
    {\rule[-\textheight/2]{1ex}{\textheight}}
  }{\textheight}%
}{0.5ex}}%
\stackon[1pt]{#1}{\scalebox{-1}{\tmpbox}}%
}
\parskip 1ex

\theoremstyle{definition}
\newtheorem{defn}{Definition}[section]

\theoremstyle{remark}

\newtheorem{rmk}[defn]{Remark}
\theoremstyle{plain}
\newtheorem{thm}[defn]{Theorem}
\newtheorem{lem}[defn]{Lemma}
\newtheorem{cor}[defn]{Corollary}
\newtheorem{prop}[defn]{Proposition}

\DeclareMathOperator*{\spn}{span}

\DeclareMathOperator*{\conv}{conv}

\numberwithin{equation}{section}
\makeatletter 

\newcommand{\Rmnum}[1]{\expandafter\@slowromancap\romannumeral #1@}

\makeatother
\makeatletter
\def\imod#1{\allowbreak\mkern10mu({\operator@font mod}\,\,#1)}
\makeatother
\makeatletter 
\patchcmd{\@settitle}{\uppercasenonmath\@title}{}{}{}
\patchcmd{\@setauthors}{\MakeUppercase}{}{}{}
\patchcmd{\section}{\scshape}{}{}{}
\makeatother
\keywords{Strong subdifferentiability, Strong proximinality, $M$-ideals, $L_1$-predual space}
\subjclass[2010]{ Primary 41A65, secondary  46B20, 41A50, 46E15}
\begin{document}
\selectfont
\title{Structure of sets of strong subdifferentiability in dual $L^1$-spaces}
\author[JAYANARAYANAN]{C. R. JAYANARAYANAN}
\address[C. R. JAYANARAYANAN]{Department of Mathematics, Indian Institute of Technology Palakkad, 678557, India\\
	\emph{E-mail~:} {\tt crjayan@iitpkd.ac.in}}
\author[RAO]{T. S. S. R. K. RAO}
\address[T. S. S. R. K. RAO]{Department of Mathematics, Ashoka University,
Sonipat - 131029, India\\	
        \emph{E-mail~:} {\tt srin@fulbrightmail.org}}

\begin{abstract}
	In this  article, we analyse the structure of finite dimensional subspaces of the set of points of strong  subdifferentiability in a dual space. In a dual $L_1(\mu)$ space, such a subspace is in the discrete part of the Yoshida-Hewitt type  decomposition. In this set up, any Banach space consisting of points of strong subdifferentiability  is necessarily finite dimensional. Our results also lead to streamlined and new proofs of results from  the study of strong proximinality for subspaces of finite co-dimension in a Banach space.
\end{abstract}
\maketitle
\section{Introduction}
Let $X$ be a real Banach space.  We recall from \cite{MR1216708} that a  vector $x \in X$  is said to be a point of strong subdifferentiability  (in short,  \textnormal{SSD}-point) if the one sided limit
 $\lim_{t \rightarrow 0^{+}} \frac{\|x+th\|-\|x\|}{t}$ exists uniformly for  $h\in B_X$, where $B_X$ denotes the closed unit ball of the Banach space $X$.
If $x^\ast \in X^\ast$ is a \textnormal{SSD} point, it was shown in \cite{MR1216708} that $x^\ast$ attains its norm.

To see the connection with proximinality, let  $Y \subset X$ be a closed subspace of $X$. For $x \in X$, let $P_Y(x) = \{y \in Y: d(x,Y) = \|x-y\|\}$.  An element of $P_Y(x)$ is called a best approximation to $x$ from $Y$. If $P_Y(x) \neq \emptyset$ for all $x \notin Y$, then $Y$ is said to be a proximinal subspace of $X$. It is well known that $x^\ast \in X^\ast$ attains its norm if and only if $Y = \ker(x^\ast)$ is a proximinal subspace of $X$.

In \cite{MR587395}, L. P. Vlasov introduced the notion of an H-set which is stronger than the notion of proximinality.
Later in \cite{MR1851725}, G. Godefroy and V. Indumathi introduced an equivalent notion of H-set, namely strong proximinality. We now give another equivalent definition of strongly proximinal subspace.
\begin{defn}
A proximinal subspace $Y$ of a Banach space $X$ is said to be strongly proximinal in $X$ if, for any $x \notin Y$ and for any sequence  $\{y_n\}_{n \geq 1}$ in $Y$ with $d(x,Y) = \lim_n\|x-y_n\|$ (such a sequence is called a minimizing sequence for $x$ in $Y$), there is a subsequence $\{y_{n_k}\}$ and a sequence $\{z_k\}_{k \geq 1}$ in $P_Y(x)$ such that $\|y_{n_k}-z_k\| \rightarrow 0$.
\end{defn}

 Our main tool in the investigation  of the structure of sets of \textnormal{SSD}-points is an astonishing geometric statement from \cite{MR1851725} that in a dual space $X^\ast$, a non-zero vector  $x^\ast$ in $X^*$
 is an \textnormal{SSD}-point of  $X^*$ if and only if $\ker(x^\ast)$ is a strongly proximinal subspace of $X$. Note that the definition of an \textnormal{SSD}-point makes no reference to the predual $X$. Thus a lot of information about \textnormal{SSD}-points in a dual space can be gleaned by choosing an appropriate predual of $X^\ast$.
 Using our analysis on \textnormal{SSD}-points, we give one of the most efficient proofs of the proximinality and strong proximinality of $M$-ideals in a Banach space without using the
 Bishop-Phelps theorem (as was done in \cite{MR2262920}) but only using simple geometry of \textnormal{SSD}-points in a dual space.

If $L_1(\mu)$ is a dual space and $F$  is a finite dimensional subspace of $L_1(\mu)$ consisting of \textnormal{SSD}-points of $L_1(\mu)$, then there is a positive integer $k$  such that $F \subset \ell_1(k)$ and $L_1(\mu) = \ell_1(k) \bigoplus_1 N$ for some closed subspace $N$, where $\ell_1(k)$ is the finite dimensional space $\mathbb{R}^k$ with $\ell_1$-norm. Moreover, we prove that any Banach space consisting of  \textnormal{SSD}-points of a dual $L_1(\mu)$ space  is finite dimensional.

 As an application  we show that if $X$ is a Banach space such that $X^\ast$ is isometric to $L_1(\mu)$ for some positive measure $\mu$
(such a Banach space $X$ is called an $L_1$-predual space), then in the class of subspaces of finite co-dimension, being strongly proximinal is transitive and is preserved under finite intersections.

We also show that the property of \textquoteleft finite intersection of strongly proximinal hyperplanes being strongly proximinal' is preserved under $c_0$-direct sums.

Our approach also leads to  several streamlined and some new proofs from  the study of strong proximinality for subspaces of finite co-dimension (see \cites{MR2262920,MR2308830, MR3314889, MR1851725}).

In this article, we consider only Banach spaces over the real field $\mathbb{R}$ and all subspaces we consider are
assumed to be closed. We denote the unit sphere of a Banach space $X$ by $S_X$. For an arbitrary collection $\{X_i: i \in I\}$  of Banach spaces, $\displaystyle{\sideset{}{_1}\bigoplus_{i\in I} X_i}$ denotes the $\ell_1$-direct sum equipped with the $\ell_1$-norm, and $\displaystyle{\sideset{}{_{c_0}}\bigoplus_{i\in I} X_i}$ and $\displaystyle{\sideset{}{_\infty}\bigoplus_{i\in I} X_i}$ denote the $c_0$-direct sum and $\ell_\infty$-direct sum  of $\{X_i: i \in I\}$ equipped with supremum norm  respectively. When there is no confusion, we omit writing the index set for the direct sums.

\section{Strong subdifferentiability and strong proximinality}
 Since finite dimensional subspaces of $X^*$ correspond precisely to finite co-dimensional subspaces of $X$, we first analyse the structure of finite co-dimensional strongly proximinal subspaces of $X$.   It is easy to see that any finite dimensional subspace  is a strongly proximinal subspace.  If $Y$ is a proximinal subspace of $X$ and $F$ is a finite dimensional subspace of $X$, then,  by \cite{MR874950}*{Corollary}, we have that $F+Y$ is a proximinal subspace of $X$ (see \cite{MR1000866} for more details). Now the following theorem from \cite{MR3904701} gives an analogous result for strong proximinality. As this is our principle tool, we reproduce the proof, for the sake of completeness.
\begin{thm}
	\label{sumsp}
Let $F$  be a finite dimensional subspace of a Banach space $X$ and $Y$ be a strongly proximinal subspace of $X$. Then $F+Y$ is a strongly proximinal subspace of $X$.
\end{thm}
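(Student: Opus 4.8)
The plan is to induct on $\dim F$. The inductive step is immediate: writing $F = \mathbb{R}v + F_0$ with $F_0$ a subspace of dimension $\dim F - 1$, the subspace $F_0 + Y$ is strongly proximinal by the inductive hypothesis (it is closed, being a finite-dimensional perturbation of the closed subspace $Y$), and then $F + Y = \mathbb{R}v + (F_0+Y)$ is handled by the base case. So it suffices to treat $F = \mathbb{R}v$; we may assume $v \notin Y$, since otherwise $\mathbb{R}v + Y = Y$. Set $Z = \mathbb{R}v + Y$, a closed subspace of $X$. Fix $x \notin Z$, put $d = d(x,Z)$, and let $\{z_n\}_{n\ge 1}$, $z_n = t_n v + y_n$ with $t_n \in \mathbb{R}$ and $y_n \in Y$, be a minimizing sequence for $x$ in $Z$; we must produce a subsequence $\{z_{n_k}\}$ and elements $w_k \in P_Z(x)$ with $\|z_{n_k} - w_k\| \to 0$.

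First I would extract convergence of the scalars. Since $\|x - z_n\| \to d$, the sequence $\{z_n\}$ is norm-bounded; passing to the quotient $X/Y$ and using $d(v,Y) > 0$ (as $v \notin Y$ and $Y$ is closed), we get $|t_n|\,d(v,Y) = \|z_n + Y\|_{X/Y} \le \|z_n\|$, so $\{t_n\}$ is bounded and hence so is $\{y_n\}$. Pass to a subsequence along which $t_n \to t$ for some $t \in \mathbb{R}$, and set $x' = x - tv$.

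The key point is that $\{y_n\}$ is then a minimizing sequence for $x'$ in $Y$ with $d(x',Y) = d$. Indeed, $\|x' - y_n\|$ and $\|x - z_n\| = \|x' - y_n - (t_n - t)v\|$ differ by at most $|t_n - t|\,\|v\| \to 0$, so $\|x' - y_n\| \to d$ and thus $d(x',Y) \le d$; conversely, since $Y \subset Z$ and $tv \in Z$, $d(x',Y) \ge d(x',Z) = d(x - tv, Z) = d(x,Z) = d$. Now invoke the strong proximinality of $Y$: there are a subsequence $\{y_{n_k}\}$ and $u_k \in P_Y(x')$ with $\|y_{n_k} - u_k\| \to 0$. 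Put $w_k = tv + u_k \in Z$. Then $\|x - w_k\| = \|x' - u_k\| = d = d(x,Z)$, so $w_k \in P_Z(x)$, and $\|z_{n_k} - w_k\| \le |t_{n_k} - t|\,\|v\| + \|y_{n_k} - u_k\| \to 0$. This proves the base case, and the induction is complete.

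I expect the only mildly delicate point to be the bookkeeping in the previous paragraph that guarantees the limiting scalar $t$ yields an optimal translate, i.e.\ $d(x - tv, Y) = d(x,Z)$; the two inequalities there are where one must be careful, while everything else is the triangle inequality together with the definition of strong proximinality of $Y$. It is also worth recording at the outset that $F+Y$ is proximinal — so that the definition of strong proximinality is even applicable — though this is automatic from the best approximations $w_k$ constructed above, or alternatively from the quoted fact that a finite-dimensional perturbation of a proximinal subspace is proximinal.
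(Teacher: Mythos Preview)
Your argument is correct and follows essentially the same route as the paper's proof: extract convergence in the finite-dimensional piece, verify that the limiting translate satisfies $d(x-f_0,Y)=d(x,F+Y)$, and then apply strong proximinality of $Y$. The only cosmetic differences are that you reduce to $\dim F=1$ by induction (the paper treats general $F$ directly via compactness) and that you bound the scalar sequence via the quotient norm $|t_n|\,d(v,Y)\le\|z_n\|$, whereas the paper invokes \cite{MR1157815}*{Theorem~5.20} for the analogous decomposition bound.
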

\begin{proof}
We already have that $F+Y$ is a proximinal subspace of $X$. Now let $x \in X$. Let$\{f_n+y_n\}_{n \geq 1} \subset F+Y$ be a sequence such that $d(x,F+Y) = \lim_n \|x-f_n-y_n\|$, where
$\{f_n\}_{n \geq 1} \subset F$ and $\{y_n\}_{n \geq 1} \subset Y$. Since any minimizing sequence is bounded, and as $F+Y$ is closed, by \cite{MR1157815}*{Theorem 5.20},
we may assume without loss of generality that both $\{f_n\}_{n \geq 1}$ and $\{y_n\}_{n \geq 1}$ are bounded sequences. As $F$ is finite dimensional, by going through a subsequence, if necessary, we may and do assume that $f_n \rightarrow f_0$ for some $f_0 \in F$. We next claim that $d(x-f_0,Y) = d(x,F+Y) = \lim_n \|x-f_0-y_n\|$.

Clearly $|\|x-f_n-y_n\|-\|x-f_0-y_n\|| \leq \|f_n-f_0\|$ so that $\lim_n \|x-f_n-y_n\| = d(x,F+Y)= \lim_n\|x-f_0-y_n\| \geq d(x-f_0,Y)$. We always have $d(x,F+Y) \leq d(x-f_0,Y)$. Hence the claim. Since $Y$ is strongly proximinal in $X$,  we have a subsequence $\{y_{n_k}\}$ and a sequence $\{z_k\} \subset P_Y(x-f_0)$ such that $\|y_{n_k}-z_k\| \rightarrow 0$.

If $z \in P_Y(x-f_0)$, we have $\|x-f_0-z\|= d(x-f_0,Y) = d(x,F+Y)$, so that $f_0+z \in P_{F+Y}(x)$.

 Now $\{f_0+z_k\} \subset P_{F+Y}(x)$ and $\|f_{n_k}+y_{n_k} - (f_0+z_k)\| \rightarrow 0$. Thus $F+Y$ is a strongly proximinal subspace of $X$.
\end{proof}


\begin{cor} Let $f_1,...,f_n \in X^\ast$ and suppose $Y = \bigcap_{i=1}^n  ker(f_i)$ is a strongly proximinal subspace of $X$. Then, for any $g_1,...,g_k \in Y^\bot=\spn\{f_1,...,f_n\}$, $Z = \bigcap_{i=1}^k ker(g_i)$ is a strongly proximinal subspace of $X$.
\end{cor}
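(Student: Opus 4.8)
The plan is to realize $Z$ in the form $F+Y$ for a suitable finite dimensional subspace $F$ and then to invoke Theorem~\ref{sumsp} directly.

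First I would check that $Y\subseteq Z$. Since each $g_i$ lies in $Y^\bot=\spn\{f_1,\dots,f_n\}$, it vanishes on $Y$, so $Y\subseteq\ker(g_i)$ for every $i$, whence $Y\subseteq\bigcap_{i=1}^k\ker(g_i)=Z$. Note also that $Z$ is a closed subspace of $X$ of finite co-dimension, as an intersection of finitely many hyperplanes.

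Next, since $\dim(X/Y)\leq n<\infty$ and $Z/Y$ is a linear subspace of $X/Y$, the quotient $Z/Y$ is finite dimensional. Choosing $e_1,\dots,e_m\in Z$ whose images form a basis of $Z/Y$ and setting $F=\spn\{e_1,\dots,e_m\}\subseteq Z$, we obtain a finite dimensional subspace $F$ with $F+Y=Z$: indeed every element of $Z$ is congruent modulo $Y$ to an element of $F$. Finally, applying Theorem~\ref{sumsp} to this finite dimensional $F$ and the strongly proximinal subspace $Y$ gives that $F+Y=Z$ is strongly proximinal in $X$, which is the assertion.

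I expect essentially no obstacle here: the corollary is an immediate structural consequence of Theorem~\ref{sumsp}, and the only points needing (minimal) care are the inclusion $Y\subseteq Z$ and the fact that a linear complement of $Y$ inside $Z$ can be taken finite dimensional, both of which follow from $g_i\in Y^\bot$ and $\dim(X/Y)<\infty$.
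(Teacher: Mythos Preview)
Your proof is correct and follows essentially the same approach as the paper: you observe that $Y\subseteq Z$ and that $Y$ has finite co-dimension in $X$, write $Z=F+Y$ for a finite dimensional $F$, and apply Theorem~\ref{sumsp}. The paper's argument is identical, only more tersely stated.
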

\begin{proof} Since $Y \subset Z \subset X$ and $Y$ is of finite co-dimension in $X$, we have that $Z = Y+F$ for some finite dimensional subspace $F$ of $X$. Thus
the conclusion follows from Theorem~\ref{sumsp}.
 \end{proof}
 Even though the  next result follows from \cite{MR1851725}*{Theorem~2.5}, we give an alternate proof of it using Theorem~\ref{sumsp}.
 \begin{cor}
 	\label{spinc}
 	Let $Z$ and $Y$ be subspaces of a Banach space $X$ such that $Z\subset Y\subset X$. If $Z$ is a finite co-dimensional strongly proximinal subspace of $X$, then $Y$ is  strongly proximinal in $X$.
 \end{cor}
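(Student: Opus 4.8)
The plan is to reduce Corollary~\ref{spinc} immediately to Theorem~\ref{sumsp} by exhibiting $Y$ as the sum of $Z$ with a suitable finite dimensional subspace. Since $Z$ is of finite co-dimension in $X$ and $Z \subset Y \subset X$, the quotient $Y/Z$ embeds in $X/Z$ and is therefore finite dimensional. Pick vectors $f_1,\dots,f_m \in Y$ whose canonical images in $Y/Z$ form a basis of $Y/Z$, and set $F = \spn\{f_1,\dots,f_m\}$, a finite dimensional subspace of $Y$ (hence of $X$).

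Next I would verify that $Y = F + Z$: the inclusion $F + Z \subset Y$ is clear since $F \subset Y$ and $Z \subset Y$, and conversely, given $y \in Y$, the chosen basis of $Y/Z$ lets us write $y + Z = \sum_i \lambda_i (f_i + Z)$ for scalars $\lambda_i$, so $y - \sum_i \lambda_i f_i \in Z$, i.e. $y \in F + Z$. (One does not even need $Y$ to be closed for this algebraic identity, though $Y$ is closed by our standing convention, as is $F + Z$ since $F$ is finite dimensional.)

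Finally, apply Theorem~\ref{sumsp} with this $F$ and the strongly proximinal subspace $Z$: it gives that $F + Z$ is strongly proximinal in $X$, and since $F + Z = Y$, the proof is complete.

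I do not expect any genuine obstacle here; the only point that needs a line of care is the choice of $F$ so that $F + Z$ is exactly $Y$ rather than merely contained in it, which is handled by lifting a basis of the finite dimensional quotient $Y/Z$. All the real content has already been absorbed into Theorem~\ref{sumsp}.
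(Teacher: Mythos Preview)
Your argument is correct and is essentially the same as the paper's own proof: both write $Y = Z + F$ for a finite dimensional $F$ (you just spell out the lift of a basis of $Y/Z$ that the paper leaves implicit) and then invoke Theorem~\ref{sumsp}.
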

 \begin{proof}
 	Since $Z$ is finite co-dimensional in $Y$, we have $Y=Z+ F$ for some finite dimensional subspace $F$ of $X$. As $Z$ is strongly proximinal in $X$ and  $F$ is finite dimensional, by Theorem~\ref{sumsp}, $Y$ is strongly proximinal in $X$.
 \end{proof}

%
%
 We now prove the following result from \cite{MR1851725} by using Theorem~\ref{sumsp}.
  \begin{cor}
 	\label{spssd}
 	Let $Y$ be a finite co-dimensional strongly proximinal subspace of a Banach space $X$. Then $Y^\perp\subset \{x^*\in X^*: x^*\mbox{ is an \textnormal{SSD}-point of }X^*\}$.
 \end{cor}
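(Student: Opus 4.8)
The plan is to reduce the statement entirely to the geometric characterization quoted in the introduction (from \cite{MR1851725}): a non-zero functional $x^\ast \in X^\ast$ is an \textnormal{SSD}-point of $X^\ast$ if and only if $\ker(x^\ast)$ is a strongly proximinal subspace of $X$. Thus the only thing I need to verify is that, for each $x^\ast \in Y^\perp$, the hyperplane $\ker(x^\ast)$ is strongly proximinal in $X$; the rest is a direct invocation of this equivalence.

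First I would fix an arbitrary $x^\ast \in Y^\perp$. The case $x^\ast = 0$ is trivial, since then $\lim_{t\to 0^+}\frac{\|0+th\|-\|0\|}{t} = \|h\|$ for every $h$, so the one-sided limit exists uniformly on $B_X$. Assume therefore $x^\ast \neq 0$. Because $x^\ast$ annihilates $Y$, we have the chain of subspaces $Y \subset \ker(x^\ast) \subset X$. By hypothesis $Y$ is finite co-dimensional and strongly proximinal in $X$, so Corollary~\ref{spinc} applies verbatim (with the role of the small finite co-dimensional strongly proximinal subspace played by $Y$) and yields that $\ker(x^\ast)$ is strongly proximinal in $X$.

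Finally, applying the characterization in the other direction: since $\ker(x^\ast)$ is strongly proximinal in $X$, the functional $x^\ast$ is an \textnormal{SSD}-point of $X^\ast$. As $x^\ast \in Y^\perp$ was arbitrary, this shows $Y^\perp \subset \{x^\ast \in X^\ast : x^\ast \text{ is an \textnormal{SSD}-point of }X^\ast\}$, which is the assertion.

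I do not expect any genuine obstacle here: the substantive content has already been isolated in Corollary~\ref{spinc} (which in turn rests on Theorem~\ref{sumsp}) and in the Godefroy--Indumathi equivalence. The only points requiring a word of care are the degenerate functional $x^\ast = 0$, and checking that Corollary~\ref{spinc} is being used with its finite co-dimensionality hypothesis on the \emph{inner} subspace $Y$ — which is exactly our standing assumption. It is perhaps worth remarking that this short argument is itself a streamlined route to the result, originally \cite{MR1851725}*{}, obtained purely from the additivity of strong proximinality under adding finite dimensional subspaces.
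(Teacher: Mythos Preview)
Your argument is correct and essentially identical to the paper's: both observe $Y \subset \ker(x^\ast) \subset X$, use finite co-dimensionality to lift strong proximinality to $\ker(x^\ast)$, and then invoke the Godefroy--Indumathi equivalence. The only cosmetic difference is that you cite Corollary~\ref{spinc} while the paper unpacks it by writing $\ker(f) = Y + G$ and appealing directly to Theorem~\ref{sumsp}.
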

 \begin{proof}
 	Let $Y$ be a finite co-dimensional strongly proximinal subspace of $X$.
 	Let $f \in Y^\bot$. Then $Y \subset \ker(f) \subset X$. Since $Y$ is finite co-dimensional in $X$, $\ker(f) = Y + G$ for a finite dimensional subspace $G$ of $X$. Then, by Theorem~\ref{sumsp}, $\ker(f)$ is strongly proximinal in $X$ and hence, by \cite{MR1851725} , $f$ is an SSD-point of $X^*$. Thus $Y^\perp\subset \{x^*\in X^*: x^*\mbox{ is an \textnormal{SSD}-point of }X^*\}$.
 \end{proof}

 We now recall the notion of an $M$-ideal in a Banach space  which is known to be a  stronger notion than strong proximinality.
\begin{defn}[\cite{MR1238713}]
	Let $X$ be a Banach space. A linear projection P on $X$ is said to be an \emph{$L$-projection} if  $\|x\|=\|Px\|+\|x-Px\|$ for all $x \in X$. A subspace $Y$ of $X$ is said to be an $L$-summand in $X$ if it is the range of an $L$-projection. A subspace $Y$ of $X$ is said to be an \emph{$M$-ideal}  in $X$ if $Y^\bot$ is an $L$-summand in $X^*$.
\end{defn}

In what follows, we need the following result from  \cite{MR1238713}*{Chapter~\Rmnum{1}, Proposition~1.17}. We include a simple proof of this result for the sake of completeness.
\begin{prop}
	\label{midealinc}
	Let $Z$ and $Y$ be subspaces of a Banach space $X$ such that $Z\subset Y\subset X$. If $Z$ is an $M$-ideal in $X$, then $Z$ is an $M$-ideal in $Y$.
\end{prop}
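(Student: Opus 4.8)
The plan is to transport the $L$-projection from $X^{*}$ down to the quotient $X^{*}/Y^{\perp}$, which is isometrically $Y^{*}$. Since $Z$ is an $M$-ideal in $X$, there is an $L$-projection $P$ on $X^{*}$ with range $Z^{\perp}$, so that $X^{*}=Z^{\perp}\oplus_{1}\ker P$ in the sense that $\|x^{*}\|=\|Px^{*}\|+\|x^{*}-Px^{*}\|$ for every $x^{*}\in X^{*}$. Because $Z\subset Y$, we have $Y^{\perp}\subset Z^{\perp}=\operatorname{range}P$, and therefore $P$ is the identity on $Y^{\perp}$; in particular $P(Y^{\perp})\subset Y^{\perp}$, so $P$ descends to a well-defined bounded linear projection $\overline{P}$ on $X^{*}/Y^{\perp}$ by $\overline{P}(x^{*}+Y^{\perp})=Px^{*}+Y^{\perp}$, whose range is $(Z^{\perp}+Y^{\perp})/Y^{\perp}=Z^{\perp}/Y^{\perp}$.

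The main point --- and the step I expect to require the most care --- is to verify that $\overline{P}$ is again an $L$-projection. Fix $x^{*}\in X^{*}$ and write $x^{*}=Px^{*}+(I-P)x^{*}$ with $Px^{*}\in Z^{\perp}$ and $(I-P)x^{*}\in\ker P$. For any $v\in Y^{\perp}\subset Z^{\perp}$, the decomposition $x^{*}-v=(Px^{*}-v)+(I-P)x^{*}$ is exactly the $\oplus_{1}$-splitting, so $\|x^{*}-v\|=\|Px^{*}-v\|+\|(I-P)x^{*}\|$; taking the infimum over $v\in Y^{\perp}$ gives $\|x^{*}+Y^{\perp}\|=\operatorname{dist}(Px^{*},Y^{\perp})+\|(I-P)x^{*}\|$. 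Applying the same identity with $(I-P)x^{*}\in\ker P$ in place of $x^{*}$ shows that $\operatorname{dist}((I-P)x^{*},Y^{\perp})=\|(I-P)x^{*}\|$, the infimum being attained at $v=0$. Hence
\[
\|x^{*}+Y^{\perp}\|=\|Px^{*}+Y^{\perp}\|+\|(I-P)x^{*}+Y^{\perp}\|=\|\overline{P}(x^{*}+Y^{\perp})\|+\|(I-\overline{P})(x^{*}+Y^{\perp})\|,
\]
so $\overline{P}$ is an $L$-projection and $Z^{\perp}/Y^{\perp}$ is an $L$-summand in $X^{*}/Y^{\perp}$.

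Finally I would identify $X^{*}/Y^{\perp}$ with $Y^{*}$ through the canonical isometric isomorphism dual to the inclusion $Y\hookrightarrow X$ (a Hahn--Banach extension argument). Under this identification the subspace $Z^{\perp}/Y^{\perp}$ corresponds precisely to the annihilator of $Z$ taken inside $Y^{*}$: if $x^{*}\in Z^{\perp}$ then $x^{*}|_{Y}$ kills $Z$, and conversely any $\varphi\in Y^{*}$ vanishing on $Z$ admits a norm-preserving extension to $X$ which again vanishes on $Z$. Thus the annihilator of $Z$ in $Y^{*}$ is an $L$-summand in $Y^{*}$, which is exactly the statement that $Z$ is an $M$-ideal in $Y$. (Alternatively one could argue via the three-ball characterization of $M$-ideals, but the quotient-of-an-$L$-summand computation above seems the most direct route.)
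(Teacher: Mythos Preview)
Your proof is correct and is essentially the same argument as the paper's: both push the $L$-projection on $X^{*}$ down to $Y^{*}\cong X^{*}/Y^{\perp}$, the paper phrasing this via Hahn--Banach extensions (defining $\widetilde{P}(g)=P(f)|_{Y}$ for an extension $f$ of $g$) and you via the quotient map, which are the same operation under the canonical isometry. Your write-up is in fact more explicit than the paper's, which simply asserts that the induced map is an $L$-projection without carrying out the distance computation you give.
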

\begin{proof}
	Let $P:X^* \to X^*$ be an $L$-projection such that $\ker(P)=Z^\perp$. It is easy to see that the projection $\widetilde{P}: Y^* \to Y^* $ defined by $\widetilde{P}(g)=P(f)|_Y$ is an $L$-projection with $\ker(P)=Z^\perp$, where $f$ is the Hahn-Banach extension of $g$ to $X$ and $Z^\perp$ is the annihilator of $Z$ in $Y^*$. Thus $Z$ is an $M$-ideal in $Y$.
\end{proof}
We next give a self contained proof of a classical result without invoking intersection properties of balls, as is usually done.
\begin{thm}
	\label{midealsp}
	Let $Y$ be an $M$-ideal in a Banach space $X$. Then $Y$ is strongly proximinal in $X$.
\end{thm}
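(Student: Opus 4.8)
The plan is to reduce, for each point $x \notin Y$, to a one-dimensional quotient, where the $L$-summand coming from the $M$-ideal hypothesis degenerates to a line and one can exhibit an \textnormal{SSD}-point by hand. Concretely, fix $x \in X \setminus Y$ and set $W = Y + \mathbb{R}x$; since $Y$ is closed and $\mathbb{R}x$ is finite dimensional, $W$ is a closed subspace of $X$ in which $Y$ has codimension one. By Proposition~\ref{midealinc}, $Y$ is an $M$-ideal in $W$, so $Y^{\perp}$, the annihilator of $Y$ in $W^{*}$, is an $L$-summand of $W^{*}$; being the annihilator of a codimension-one subspace, it is also one dimensional. Write $Y^{\perp} = \mathbb{R}\psi$ with $\|\psi\| = 1$ and $\psi|_{Y} = 0$, and let $W^{*} = \mathbb{R}\psi \oplus_{1} Z$ be the corresponding $L$-decomposition, where $Z = \ker P$ for the $L$-projection $P$ onto $\mathbb{R}\psi$.

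I would then check directly that $\psi$ is an \textnormal{SSD}-point of $W^{*}$. Given $h \in B_{W^{*}}$, write $h = b\psi + w$ with $b \in \mathbb{R}$, $w \in Z$ and $|b| + \|w\| = \|h\| \leq 1$. For $0 < t < 1$ we have $|tb| < 1$, hence
\[
  \|\psi + th\| \;=\; \|(1+tb)\psi\| + \|tw\| \;=\; (1+tb) + t\|w\|,
\]
so that $\tfrac{1}{t}\bigl(\|\psi + th\| - \|\psi\|\bigr) = b + \|w\|$ is independent of $t$; in particular the one-sided limit as $t \to 0^{+}$ exists uniformly over $h \in B_{W^{*}}$, and $\psi$ is an \textnormal{SSD}-point of $W^{*}$. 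By the equivalence between \textnormal{SSD}-points of a dual space and strong proximinality of their kernels (see \cite{MR1851725}), applied to the dual space $W^{*}$ with predual $W$, the subspace $Y = \ker\psi$ is strongly proximinal in $W$.

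It remains to transfer this back to $X$. The distance $d(x,Y)$, the set $P_{Y}(x)$ of best approximations, and the notion of a minimizing sequence for $x$ in $Y$ all refer only to $Y$, $x$ and the norm, and not to any ambient space; hence the previous paragraph already gives $P_{Y}(x) \neq \emptyset$ and shows that every minimizing sequence for $x$ in $Y$ has a subsequence approaching $P_{Y}(x)$ in the sense of the definition. As $x \in X \setminus Y$ was arbitrary, $Y$ is proximinal, and in fact strongly proximinal, in $X$. The only step that is not mere bookkeeping is the passage from $X$ to $W = Y + \mathbb{R}x$: this is exactly what collapses $Y^{\perp}$ to a line and makes the \textnormal{SSD}-point verification immediate, and it is here that the $M$-ideal hypothesis is used (through Proposition~\ref{midealinc}). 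Note that proximinality of $Y$ comes out as a by-product, and that no intersection property of balls and no Bishop-Phelps type result enters the argument.
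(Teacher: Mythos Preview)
Your proof is correct and follows essentially the same route as the paper: both reduce via Proposition~\ref{midealinc} to the codimension-one case, observe that $Y^\perp$ is then a one-dimensional $L$-summand, show its generator is an \textnormal{SSD}-point, and invoke the Godefroy--Indumathi equivalence. The only difference is that where the paper cites \cite{MR1216708}*{Proposition~2.1} for the \textnormal{SSD}-point step, you carry out the (easy) verification by hand.
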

\begin{proof}
	For all $x\in X$, by Proposition~\ref{midealinc}, $Y$ is an $M$-ideal in $\spn\{Y\cup \{x\}\}$. Since strong  proximinality of $Y$ in  $\spn\{Y\cup \{x\}\}$ for all $x\in X$ implies the strong proximinality of $Y$ in $X$, we assume without loss of generality that  $Y$ is an $M$-ideal of co-dimension one in $X$.
	
	Let $Y=\ker(f)$ for some $f\in S_{X^*}$. Then $\spn(f)=Y^\perp$ is an $L$-summand in $X^*$. Since $f$ is an SSD-point of $\spn(f)$ and $\spn(f)$ is an $L$-summand in $X^*$, by \cite{MR1216708}*{Proposition~2.1}, $f$ is an \textnormal{SSD}-point of $X^*$. Thus $Y=\ker(f)$ is strongly proximinal in $X$.
\end{proof}
The following result characterizes SSD-points of duals of $L_1$-predual spaces. For a compact Hausdorff space $K$, we denote by $C(K)$ the space of continuous functions on $K$ equipped with supremum norm
\begin{thm}
	\label{ssdl1}
	Let $X$ be an  $L_1$-predual space and $f\in S_{X^*}$.  Then $f$ is an SSD-point of $X^*$ if and only if $f=\sum_{i=1}^{n}\alpha_i f_i$ for some scalars $\alpha_i$ and for some distinct  $f_1, \ldots, f_n\in S_{X^*}$  such that $\spn(f_i)$ is an $L$-summand in $X^*$ for all $i$.
\end{thm}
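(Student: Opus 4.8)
The plan is to read both implications off the geometry of $L_1(\mu)$, using only the classical fact that the $L$-summands of $L_1(\mu)$ are exactly the bands $L_1(\mu|_E)$ ($E$ measurable), so that a one-dimensional $L$-summand is precisely the span of a normalized indicator $\chi_A/\mu(A)$ of an atom $A$, together with the behaviour of strong subdifferentiability under $L$-summands recorded in \cite{MR1216708}*{Proposition~2.1}. Write $X^\ast=L_1(\mu)$ throughout.

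For the ``if'' direction I would first unpack the hypothesis: if $f=\sum_{i=1}^{n}\alpha_i f_i$ with the $f_i\in S_{X^\ast}$ distinct and each $\spn(f_i)$ an $L$-summand, then $f_i=\pm\chi_{A_i}/\mu(A_i)$ for atoms $A_i$, so, with $A=\bigcup_i A_i$, the subspace $M:=\spn\{f_1,\dots,f_n\}=L_1(\mu|_A)$ is a finite-dimensional $L$-summand of $L_1(\mu)$ containing $f$. Since every point of a finite-dimensional normed space is an SSD-point — the difference quotients $(\|x+th\|-\|x\|)/t$ are non-increasing as $t\downarrow0^+$ and continuous in $h$, and their pointwise limit, the right-hand directional derivative of the norm, is continuous, so Dini's theorem on the compact ball $B_M$ gives uniform convergence — $f$ is an SSD-point of $M$, and then \cite{MR1216708}*{Proposition~2.1}, used exactly as in the proof of Theorem~\ref{midealsp}, upgrades this to $f$ being an SSD-point of $X^\ast$. (If one prefers to avoid Proposition~2.1, one can instead verify by hand that for $0<t<\min_i|\alpha_i|\mu(A_i)$ the quotient $(\|f+th\|-\|f\|)/t$ is independent of $t$, for every $h\in B_{X^\ast}$.)

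For the ``only if'' direction — the substantive one — suppose $f\in S_{X^\ast}$ is an SSD-point. Via the Yoshida--Hewitt decomposition $L_1(\mu)=L_1(\mu_d)\oplus_1 L_1(\mu_c)$ with $\mu_c$ non-atomic, write $f=f_d+f_c$; the goal is to show $f_c=0$ and that $f_d$ is supported on finitely many atoms, which is subsumed by the single claim: \emph{if $f$ is an SSD-point of $X^\ast$ then $S:=\{f\neq0\}$ is, modulo a null set, a finite union of atoms.} To prove the contrapositive, assume $S$ is not essentially a finite union of atoms. An elementary measure-theoretic argument then yields, for every $\varepsilon>0$, a measurable set $A$ with $0<c:=\int_A|f|\,d\mu<\varepsilon$ — either $\mu|_S$ has a non-atomic part, in which case one intersects it with a level set $\{1/n<|f|<n\}$ of positive measure and extracts a subset of small measure, or $S$ meets infinitely many atoms, whose $f$-masses tend to $0$ since $\int_S|f|\,d\mu<\infty$. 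For such an $A$, I would take the normalized direction $h:=-f\chi_A/c\in B_{X^\ast}$, for which a one-line computation gives
\[
\frac{\|f+th\|-\|f\|}{t}=
\begin{cases}
-1,& 0<t\le c,\\
1-2c/t,& t>c.
\end{cases}
\]
Thus the one-sided derivative of the norm at $f$ along $h$ equals $-1$, whereas at $t=2c$ the difference quotient equals $0$; since $2c$ may be taken as small as we like, the difference quotients cannot converge uniformly over $B_{X^\ast}$, contradicting strong subdifferentiability at $f$. Hence $S$ is essentially a finite union of distinct atoms $A_1,\dots,A_n$, so $f=\sum_{i=1}^{n}\alpha_i\chi_{A_i}$ a.e.\ with each $\alpha_i\neq0$; setting $f_i:=\sgn(\alpha_i)\,\chi_{A_i}/\mu(A_i)\in S_{X^\ast}$ produces distinct unit vectors with each $\spn(f_i)=L_1(\mu|_{A_i})$ an $L$-summand and $f=\sum_{i=1}^{n}\bigl(|\alpha_i|\mu(A_i)\bigr)f_i$, the required representation.

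I expect the main obstacle to be precisely this last argument, and within it the passage from ``$f$ carries mass on sets of arbitrarily small measure'' to a failure of strong subdifferentiability: the content lies in choosing the oscillating directions $h=-f\chi_A/c$ and noticing that, although the one-sided derivative along each of them is $-1$, the corresponding difference quotient has already returned to $0$ by the scale $t=2c$. Once this family of directions is in hand, the remaining steps — the measure-theoretic reduction to a finite union of atoms and reading off the stated representation of $f$ — are routine.
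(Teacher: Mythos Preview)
Your argument is correct, but the route is quite different from the paper's.

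For the ``if'' direction, the paper does not compute inside $L_1(\mu)$ at all: it observes that each $\ker(f_i)$ is an $M$-ideal in $X$, that a finite intersection of $M$-ideals is again an $M$-ideal, and then invokes Theorem~\ref{midealsp} and Corollary~\ref{spinc} together with the Godefroy--Indumathi equivalence to conclude that $\ker(f)$ is strongly proximinal, hence $f$ is an SSD-point. Your approach bypasses strong proximinality and $M$-ideal machinery entirely: you place $f$ inside a finite-dimensional $L$-summand and lift SSD via \cite{MR1216708}*{Proposition~2.1}. This is shorter and more self-contained, though it loses the thematic link with strong proximinality that the paper is cultivating.

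For the ``only if'' direction the contrast is sharper. The paper passes to the bidual: since $X^{**}\cong C(K)$ and $X^*$ is an $L$-summand in $X^{***}=C(K)^*$, the SSD property transfers up to $C(K)^*$, and then the cited result \cite{MR2308830}*{Theorem~2.1} of Dutta--Narayana gives that $f$ is a finitely supported measure; an $L$-projection argument brings the atoms back down into $X^*$. Your argument stays entirely inside $L_1(\mu)$ and is in effect a direct, elementary proof of the Dutta--Narayana phenomenon in that setting: the explicit family $h=-f\chi_A/c$ with $c=\int_A|f|\,d\mu$ arbitrarily small, and the exact computation of the difference quotient, witness the failure of uniform convergence whenever $f$ is not supported on finitely many atoms. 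This avoids both the bidual embedding and the external citation, at the price of a small amount of measure-theoretic bookkeeping (splitting into the non-atomic and infinitely-many-atoms cases). Either approach is legitimate; yours is the more elementary, the paper's the more structural.
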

\begin{proof}
	Suppose $f=\sum_{i=1}^{n}\alpha_i f_i$ for some distinct  $f_1, \ldots, f_n\in S_{X^*}$  such that $\spn(f_i)$ is an $L$-summand in $X^*$ for all $i$. Then $\ker(f_i)$ is an $M$-ideal in $X$ and hence, by \cite{MR1238713}*{Chapter~\Rmnum{1}, Proposition~1.11}, $\bigcap_{i=1}^n \ker(f_i)$ is an $M$-ideal in $X$. Since  $\bigcap_{i=1}^n \ker(f_i)\subset \ker(f)\subset X$, by  Theorem~\ref{midealsp} and Corollary~\ref{spinc} , $\ker(f)$ is strongly proximinal in $X$ and hence $f$ is an SSD-points of $X^*$.
	
	Now suppose $f$ is an SSD-point of $X^*$. Since $X$ is an $L_1$-predual space, by \cite{MR0493279}*{Section~21, Theorem~6}, $X^{**}$  is isometric to $C(K)$ for some compact Hausdorff space $K$ and by \cite{MR1238713}*{Chapter~\Rmnum{4}, Example~1.1}, $X^*$ is an $L$-summand in $X^{***}$. Thus $f$ is  an SSD-point of $X^{***}=C(K)^*$. Then, by \cite{MR2308830}*{Theorem~2.1}, $f$ is a finitely supported measure on $K$. So we may assume without loss of generality that $f=\sum_{i=1}^n \alpha_i \phi_i$ for $\alpha_i\in [0,1]$ with $\sum_{i=1}^n \alpha_i=1$ and for $\phi_i\in X^{***}$ with $\spn(\phi_i)$ is an $L$-summand in $X^{***}$ for all $i$.
	
	Now we note that for an $L$-summand $U$ in a Banach space $V$, if $u\in S_U$ is such that $u=\frac{v_1+v_2}{2}$ for unit vectors $v_1, v_2\in V$, then $v_1, v_2\in U$. For, if $P$ is an $L$-projection from $V$ onto $U$. Then   $u=P(u)=\frac{P(v_1)+P(v_2)}{2}$. Thus $\|P(v_1)\|=\|P(v_2)\|=\|v_1\|=\|v_2\|=1$. Since $P$ is an $L$-projection, we get $P(v_i)=v_i$ for all $i$ and hence $v_1, v_2\in U$.
	
	Since $X^*$ is an $L$-summand in $X^{***}$, by the above argument, we get $\phi_i\in X^*$ for all $i$. So if we let $f_i=\phi_i$ for all $i$, then $f_1, \ldots, f_n$ satisfy the required properties.
\end{proof}	
The next set of corollaries are easy to prove.
\begin{cor}
	\label{sumssd}
	Let  $X$ be an  $L_1$-predual space. Then the set of \textnormal{SSD}-points of $X^*$ is a subspace.
\end{cor}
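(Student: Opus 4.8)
The plan is to pin down the set $S$ of all SSD-points of $X^*$ explicitly and observe it is manifestly a subspace. Concretely, I claim that $S = \spn\,\mathcal{E}$, where
\[
\mathcal{E} := \{f \in S_{X^*} : \spn(f) \text{ is an } L\text{-summand in } X^*\}.
\]
Since a linear span is a subspace, the corollary follows at once from this identification, and both inclusions will be extracted directly from Theorem~\ref{ssdl1}.

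First I would record two elementary facts about $S$. The difference quotient $t\mapsto \tfrac{\|0+th\|-\|0\|}{t}$ is identically $\|h\|$, so $0$ is an SSD-point; and the one-sided derivative of the norm at a point is positively homogeneous in that point, while passing from $h$ to $-h$ leaves $B_X$ invariant, so $S$ is closed under multiplication by arbitrary scalars. Hence for a non-zero $g\in S$ the vector $g/\|g\|$ lies in $S\cap S_{X^*}$, and the forward implication of Theorem~\ref{ssdl1} writes it as a finite linear combination $\sum_i\alpha_i f_i$ with $f_i\in\mathcal{E}$; multiplying by $\|g\|$ gives $g\in\spn\,\mathcal{E}$. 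Together with $0\in\spn\,\mathcal{E}$, this yields $S\subseteq\spn\,\mathcal{E}$.

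For the reverse inclusion, take $g=\sum_{i=1}^n\beta_i f_i$ with each $f_i\in\mathcal{E}$. If $g=0$ then $g\in S$. If $g\neq 0$, I would first rewrite the sum so that the vectors occurring are pairwise distinct: whenever $f_i=\pm f_j$ for $i\neq j$ (note $\spn(f_i)=\spn(-f_i)$, so $-f_i\in\mathcal{E}$ as well) I combine the two terms into a single scalar multiple of $f_i$, discarding any term whose coefficient collapses to $0$. After normalising, $g/\|g\|$ is then exactly of the form appearing in Theorem~\ref{ssdl1}, so the reverse implication of that theorem gives $g/\|g\|\in S$, and scalar-closure gives $g\in S$. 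Thus $\spn\,\mathcal{E}\subseteq S$, and $S=\spn\,\mathcal{E}$ is a subspace. The argument is essentially bookkeeping once Theorem~\ref{ssdl1} is available; the only point needing a moment's care is the reduction to distinct $f_i$'s in the reverse inclusion, which is harmless precisely because $\mathcal{E}$ is symmetric under $f\mapsto -f$.
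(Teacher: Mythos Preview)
Your proof is correct and is essentially the intended argument: the paper states this corollary immediately after Theorem~\ref{ssdl1} and declares it ``easy to prove,'' leaving the reader to observe, exactly as you do, that the SSD-points of $X^*$ coincide with the linear span of the unit vectors whose one-dimensional span is an $L$-summand. Your careful handling of scalar-closure and the reduction to distinct $f_i$'s just makes explicit the bookkeeping the paper omits.
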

\begin{cor}
Let $X$ be an $L_1$-predual space and $F\subset X^*$ be a finite dimensional subspace. Then $F\subset \{x^*\in X^*: x^*\mbox{ is an \textnormal{SSD}-point of }X^*\} $ if and only if $F\subset \spn\{x^*\in X^*: \spn(x^*) \mbox{ is an $L$-summand in } X^*\}$.
\end{cor}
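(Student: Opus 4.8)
The plan is to read this corollary off from Theorem~\ref{ssdl1}, using Corollary~\ref{sumssd} (that the set of \textnormal{SSD}-points of $X^*$ is a subspace) to move freely between a vector and its normalization. Write $S:=\spn\{x^*\in X^*:\spn(x^*)\text{ is an $L$-summand in }X^*\}$ for the subspace on the right-hand side of the equivalence. Both sides of the asserted ``iff'' say that $F$ is contained in a fixed subset of $X^*$, so I would argue one element at a time; and for a nonzero $f\in F$ I would pass to the unit vector $f/\|f\|$, which is an \textnormal{SSD}-point precisely when $f$ is (Corollary~\ref{sumssd}) and lies in $S$ precisely when $f$ does (since $S$ is a subspace). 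The zero vector is both an \textnormal{SSD}-point and a member of $S$, so it can be set aside.

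For the implication that a finite dimensional space of \textnormal{SSD}-points is contained in $S$: given a nonzero $f\in F$, apply Theorem~\ref{ssdl1} to $f/\|f\|\in S_{X^*}$ to obtain $f/\|f\|=\sum_{i=1}^n\alpha_i f_i$ with distinct $f_i\in S_{X^*}$ and $\spn(f_i)$ an $L$-summand in $X^*$ for each $i$. Each such $f_i$ is one of the generators of $S$, so $f/\|f\|\in S$, and therefore $f\in S$. Hence $F\subset S$.

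For the converse: given $f\in F\subset S$ nonzero, expand $f$ as a finite linear combination of nonzero vectors whose linear spans are $L$-summands in $X^*$, normalize these vectors, and combine any two that are scalar multiples of one another; this produces an expression $f=\sum_{i=1}^n\gamma_i h_i$ in which the $h_i\in S_{X^*}$ are distinct and each $\spn(h_i)$ is an $L$-summand. Dividing through by $\|f\|$ exhibits $f/\|f\|$ in exactly the form appearing in the sufficiency half of Theorem~\ref{ssdl1}, so $f/\|f\|$ is an \textnormal{SSD}-point of $X^*$, hence so is $f$, and thus $F$ consists of \textnormal{SSD}-points.

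There is no genuine obstacle here: the statement is a formal consequence of Theorem~\ref{ssdl1}. The only points requiring attention are the use of Corollary~\ref{sumssd} to justify normalization and, in the converse direction, the bookkeeping needed to merge proportional generators so that the distinctness hypothesis of Theorem~\ref{ssdl1} is met; notice also that the hypothesis ``$F$ finite dimensional'' is never actually used, as the whole verification is carried out vector by vector.
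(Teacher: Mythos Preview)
Your proof is correct and is exactly the intended approach: the paper offers no explicit proof, stating only that ``the next set of corollaries are easy to prove,'' and your argument correctly reads the result off from Theorem~\ref{ssdl1} with the natural bookkeeping on normalization. Your closing observation that the finite-dimensionality of $F$ is never used is also accurate; the statement holds verbatim for any subset $F\subset X^*$.
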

The following proposition illustrates our remarks about using different preduals. For $C(K)$ spaces with $K$ being a dispersed compact Hausdorff space, we give a simple proof of  \cite{MR2308830}*{Theorem 2.1}.

\begin{prop} Let $K$ be a dispersed compact Hausdorff space and let  $\mu\in C(K)^\ast$ be a unit vector. Then $\mu$ is an SSD-point point of $C(K)^*$  if and only if support of $\mu$ is a finite set.
\end{prop}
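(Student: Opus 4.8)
The plan is to compute the SSD condition using a predual of $C(K)^\ast$ other than $C(K)$ itself, exactly the kind of manoeuvre advertised in the remark preceding the statement. Since $K$ is dispersed, every regular Borel measure on $K$ is purely atomic, and it is classical that $C(K)^\ast$ is then isometrically isomorphic to $\ell_1(K)$ via the map $\mu\mapsto(\mu(\{x\}))_{x\in K}$; as points are closed in a Hausdorff space, $\supp\mu$ is a finite set if and only if $\{x\in K:\mu(\{x\})\neq 0\}$ is finite. Now $\ell_1(K)=c_0(K)^\ast$, where $c_0(K)$ is the space of real-valued functions on $K$ vanishing at infinity with the supremum norm, and being an SSD-point is an isometric property of $C(K)^\ast$ that makes no reference to a predual. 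So from here on I would regard $\mu$ as the functional $g\mapsto\sum_{x\in K}\mu(\{x\})g(x)$ on $c_0(K)$ and test the SSD condition there.

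For the forward direction, suppose $\mu$ is an SSD-point of $C(K)^\ast=c_0(K)^\ast$. Then, by \cite{MR1216708}, $\mu$ attains its norm on $c_0(K)$, so there is $f\in S_{c_0(K)}$ with $\sum_{x}\mu(\{x\})f(x)=\|\mu\|=\sum_{x}|\mu(\{x\})|$, the last equality holding because $\mu$ is atomic. Comparing the two series term by term (using $|f(x)|\le 1$) forces $f(x)=\sgn(\mu(\{x\}))$ whenever $\mu(\{x\})\neq 0$. But $\{x\in K:|f(x)|=1\}$ is finite because $f\in c_0(K)$, hence $\{x:\mu(\{x\})\neq 0\}$ is finite, i.e. $\supp\mu$ is finite. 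The whole point of switching to the predual $c_0(K)$ is this one-line observation: a norm-attaining functional on $c_0(K)$ is automatically finitely supported.

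For the converse, suppose $\supp\mu=\{x_1,\dots,x_n\}$, so that $\mu(\{x\})=0$ for $x\notin\{x_1,\dots,x_n\}$ and the finite-codimensional subspace $Z:=\{g\in c_0(K):g(x_1)=\dots=g(x_n)=0\}$ is contained in $\ker\mu$. For each $i$, the hyperplane $\{g\in c_0(K):g(x_i)=0\}$ is an $M$-summand in $c_0(K)$, because $c_0(K)$ is the $\ell_\infty$-direct sum of this hyperplane and the one-dimensional space of functions supported at $x_i$; in particular it is an $M$-ideal, and by \cite{MR1238713}*{Chapter~\Rmnum{1}, Proposition~1.11} their finite intersection $Z$ is an $M$-ideal in $c_0(K)$. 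By Theorem~\ref{midealsp}, $Z$ is strongly proximinal in $c_0(K)$, and since $Z\subseteq\ker\mu\subseteq c_0(K)$ with $Z$ of finite codimension, Corollary~\ref{spinc} gives that $\ker\mu$ is strongly proximinal in $c_0(K)$. By the Godefroy--Indumathi theorem \cite{MR1851725}, $\mu$ is then an SSD-point of $c_0(K)^\ast=C(K)^\ast$, completing the argument.

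I do not expect a serious obstacle here; the steps requiring care are (i) invoking dispersedness to obtain the isometry $C(K)^\ast\cong\ell_1(K)=c_0(K)^\ast$ and to identify the two meanings of ``support'', and (ii) keeping the argument independent of Theorem~\ref{ssdl1}, whose proof relies on \cite{MR2308830}*{Theorem~2.1} --- the very statement that this proposition re-derives in the dispersed case. Everything else is a short assembly of facts about $M$-ideals and strong proximinality established earlier in the paper.
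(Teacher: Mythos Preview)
Your forward direction is exactly the paper's: identify $C(K)^\ast$ with $\ell_1(\Gamma)=c_0(\Gamma)^\ast$ using dispersedness, observe that an SSD-point attains its norm on $c_0(\Gamma)$, and note that norm-attainment on $c_0$ forces finite support.

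Your converse is correct but differs from the paper's. The paper stays in $C(K)$: it uses that each $\ker(\delta_{k_i})$ is an $M$-ideal in $C(K)$ (\cite{MR1238713}*{Chapter~\Rmnum{1}, Example~1.4(a)}), concludes each $\delta_{k_i}$ is an SSD-point via Theorem~\ref{midealsp}, and then invokes Corollary~\ref{sumssd} to pass to the linear combination $\mu$. You instead carry out the converse in $c_0(K)$, intersecting the obvious $M$-summands and applying Theorem~\ref{midealsp} and Corollary~\ref{spinc} directly to $\ker\mu$. The payoff of your route is the one you flagged: Corollary~\ref{sumssd} rests on Theorem~\ref{ssdl1}, whose forward implication quotes \cite{MR2308830}*{Theorem~2.1}, so the paper's converse is not logically independent of the result being re-proved, whereas yours is. The paper's route has the minor aesthetic advantage of keeping the converse inside $C(K)$, matching the statement.
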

\begin{proof} Suppose $\mu$ is an SSD-point of $C(K)^*$. Since $K$ is a dispersed compact Hausdorff space, we know that $C(K)^\ast$ is isometric to $\ell_1(\Gamma)$ for a discrete set $\Gamma$ (see  Sections 5 and 18 of \cite{MR0493279}).
	On the other hand if $c_0(\Gamma)$ denotes the space of functions on $\Gamma$ vanishing at $\infty$, then $c_0(\Gamma)^\ast = \ell_1(\Gamma)$. Since $ker(\mu)
	\subset c_0(\Gamma)$ is now a proximinal subspace, we have that $\mu$ attains its norm. It is now easy to see that $\mu$ has only finitely many non-zero coordinates. Thus $\mu$ is finitely supported as an element of $C(K)^\ast$.
	
	Conversely suppose support of $\mu$ is a finite set. Then $\mu=\sum_{i=1}^{n}\alpha_i \delta_{k_i}$ for scalars $\alpha_1, \ldots,\alpha_n$ and $k_1,\ldots,k_n\in K$. Now, by \cite{MR1238713}*{Chapter~I, Example~1.4(a)}, $\ker(\delta_{k_i})=\{f\in C(K): f(k_i)=0 \}$ is an $M$-ideal in $C(K)$. Then, by Theorem~\ref{midealsp}, $\ker(\delta_{k_i})$ is strongly proximinal in $C(K)$ and hence $\delta_{k_i}$ is an SSD-point of $C(K)^*$ for all $i$. Then, by Corollary~\ref{sumssd}, $\mu$ is an SSD-point of $C(K)^*$
\end{proof}

The next result gives the structure of finite dimensional subspaces of $L_1(\mu)$ when $L_1(\mu)$ is a dual space.
\begin{prop} Let $L_1(\mu)$ be a dual space and  $F$  be a finite dimensional subspace of $L_1(\mu)$. Then $F$ consists of \textnormal{SSD}-points of $L_1(\mu)$  if and only if there exists a  positive integer $k$ such that $F \subset \ell_1(k)$ and $L_1(\mu) = \ell_1(k) \bigoplus_1 N$ for a subspace $N$ of $L_1(\mu)$.
\end{prop}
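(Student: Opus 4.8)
The plan is to read the proposition off Theorem~\ref{ssdl1} once we pin down which one-dimensional $L$-summands can occur in a \emph{dual} $L_1(\mu)$. Throughout, write $L_1(\mu)=X^\ast$, so that $X$ is an $L_1$-predual space and both Theorem~\ref{ssdl1} and Corollary~\ref{sumssd} are available for $X^\ast=L_1(\mu)$; note also that $B_{L_1(\mu)}$ must have extreme points (Krein--Milman), which forces $\mu$ to be purely atomic. In particular a dual $L_1(\mu)$ has atoms, which disposes of the degenerate case $F=\{0\}$ at once.

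For the ``if'' direction, suppose $L_1(\mu)=\ell_1(k)\bigoplus_1 N$ with $F\subset\ell_1(k)$, and let $e_1,\dots,e_k$ be the canonical basis of $\ell_1(k)$. Each $\spn(e_i)$ is an $L$-summand in $\ell_1(k)$, and $\ell_1(k)$ is an $L$-summand in $L_1(\mu)$; since an $L$-summand of an $L$-summand is again an $L$-summand (compose the two $L$-projections and check additivity of the norm), each $\spn(e_i)$ is an $L$-summand in $L_1(\mu)$. By the easy half of Theorem~\ref{ssdl1} (the case $n=1$), $e_i/\|e_i\|$ is an SSD-point of $L_1(\mu)$ for every $i$, and since the set of SSD-points of $L_1(\mu)$ is a subspace by Corollary~\ref{sumssd}, every vector of $\ell_1(k)=\spn\{e_1,\dots,e_k\}$, in particular every vector of $F$, is an SSD-point.

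For the ``only if'' direction, assume $F\neq\{0\}$ and fix a basis $g_1,\dots,g_m$ of $F$. After normalising, each $g_j$ is an SSD-point, so Theorem~\ref{ssdl1} writes $g_j=\sum_i\alpha_{ji}f_{ji}$ with $f_{ji}\in S_{L_1(\mu)}$ and $\spn(f_{ji})$ an $L$-summand in $L_1(\mu)$. The finitely many unit vectors $f_{ji}$ then each span a one-dimensional $L$-summand, and $F\subseteq\spn\{f_{ji}:i,j\}$. Now I invoke the structure of $L$-summands of an $L_1$-space (\cite{MR1238713}*{Chapter~\Rmnum{4}}): they are precisely the bands $L_1(\mu|_B)=\{h\in L_1(\mu):h=0\text{ off }B\}$, $B$ measurable, so a one-dimensional $L$-summand equals $L_1(\mu|_A)$ for an atom $A$ of $\mu$ and is spanned by $\mathbf 1_A$. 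Hence each $f_{ji}$ is a scalar multiple of $\mathbf 1_A$ for some atom $A$; distinct atoms are $\mu$-essentially disjoint, so letting $A_1,\dots,A_k$ enumerate the atoms that actually occur we get
\[
F\subseteq\spn\{f_{ji}:i,j\}\subseteq\spn\{\mathbf 1_{A_1},\dots,\mathbf 1_{A_k}\}=L_1\!\left(\mu|_{A_1\cup\cdots\cup A_k}\right)=:E .
\]
Rescaling $\mathbf 1_{A_i}$ by $\mu(A_i)$ exhibits an isometry $E\cong\ell_1(k)$; moreover $E$ is a band, hence an $L$-summand, and $L_1(\mu)=E\bigoplus_1 L_1\!\left(\mu|_{(A_1\cup\cdots\cup A_k)^c}\right)$ is the required decomposition with $F\subseteq E=\ell_1(k)$.

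The one genuinely structural input — everything else being bookkeeping on top of Theorem~\ref{ssdl1} and Corollary~\ref{sumssd} — is the classical description of the $L$-summands of $L_1(\mu)$ as bands together with the observation that a one-dimensional band forces the underlying set to be an atom; this is the step I expect to spend the most care on. One can also circumvent it by first recording that a dual $L_1(\mu)$ is isometric to $\ell_1(\Gamma)$ for a suitable index set $\Gamma$ and then checking directly that the only one-dimensional $L$-summands of $\ell_1(\Gamma)$ are the coordinate lines $\spn(e_\gamma)$.
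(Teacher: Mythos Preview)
Your proof is correct and follows the same overall plan as the paper: both directions hinge on Theorem~\ref{ssdl1}, and the ``if'' direction is essentially identical. Where you diverge is in how the ``only if'' direction concludes that the span of the $f_{ji}$'s is an $\ell_1(k)$-summand. You invoke the classification of $L$-summands of $L_1(\mu)$ as bands (\cite{MR1238713}*{Chapter~\Rmnum{4}}) to identify each $f_{ji}$ with an atom indicator and then take the band generated by those atoms. The paper stays abstract: after extracting linearly independent $f_1,\dots,f_k$ with each $\spn(f_i)$ an $L$-summand, it notes that $\spn\{f_1,\dots,f_k\}$ is isometric to $\ell_1(k)$ directly from $L$-summand orthogonality, and that a finite sum of $L$-summands is again an $L$-summand. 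The paper's route is shorter and sidesteps exactly the band step you flagged as needing the most care; your route is more concrete and makes the atomic structure visible. One small wrinkle in your aside: Krein--Milman only guarantees \emph{some} extreme point of $B_{L_1(\mu)}$, hence \emph{some} atom --- which is all you need for the $F=\{0\}$ case --- but not ``purely atomic'' as stated; the stronger claim is true, but needs an extra argument (e.g., $L$-summands in dual spaces are weak$^*$-closed, so a nonatomic part would be a dual $L_1$-space).
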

\begin{proof}
	Suppose $F \subset \ell_1(k)$ and $L_1(\mu) = \ell_1(k)\bigoplus_1 N$. Since, for the canonical basis $\{e_1,\ldots,e_k\}$ of $\ell_1(k)$,
 $\spn(e_i)$  is an $L$-summand in $L_1(\mu)$, by Theorem~\ref{ssdl1}, we get that $F$ consists of \textnormal{SSD}-points of $L_1(\mu)$.

 Conversely suppose that $F$ consists of \textnormal{SSD}-points of $L_1(\mu)$. Then, again by Theorem~\ref{ssdl1}, there exists a positive integer $k$ such that $F \subset
 \spn \{f_1,...,f_k\}$, where $f_i$'s are linearly independent and $\spn(f_i)$ is an $L$-summand in $L_1(\mu)$. Clearly $\spn \{f_1,...,f_k\}$ is isometric to $\ell_1(k)$. Since $\spn \{f_1,...,f_k\}=\spn(f_1)+\ldots+\spn(f_k)$ and  sum of finitely many $L$-summands is again an $L$-summand, it follows that $\spn\{f_1,...,f_k\}$
  is an $L$-summand of $L_1(\mu)$.  Thus $L_1(\mu) = \ell_1(k) \bigoplus_1 N$ for a subspace $N$ of $L_1(\mu)$.
 \end{proof}
 \begin{cor} Let $L_1(\mu)$ be a dual space and let $B$ be a Banach space consisting of\textnormal{SSD}-points of $L_1(\mu)$. Then $B$ is a finite dimensional space.
 \end{cor}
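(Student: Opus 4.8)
The plan is to argue by contradiction: assuming $\dim B=\infty$, I will exhibit a vector of $B$ that is not an SSD-point of $L_1(\mu)$. The crucial reduction is that the SSD-points of $L_1(\mu)$ are exactly the vectors supported on finitely many atoms. Indeed, writing $L_1(\mu)=X^{*}$, the predual $X$ is automatically an $L_1$-predual space, so Theorem~\ref{ssdl1} applies: an SSD-point $f$ of $L_1(\mu)$ has the form $f=\sum_{i=1}^{n}\alpha_i f_i$ with each $\spn(f_i)$ a one-dimensional $L$-summand of $L_1(\mu)$. Since the $L$-summands of an $L_1$-space are precisely the bands $L_1(\mu|_{E})$ (classical, and already implicit in the Proposition preceding this Corollary), a one-dimensional one is $\spn(\mathbf 1_A)$ for an atom $A$ of $\mu$. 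Hence each $f\in B$ is a finite linear combination of indicator functions of atoms, so it is supported on finitely many atoms and vanishes on the non-atomic part of $L_1(\mu)$.

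Next, let $\Lambda$ be the set of atoms of $\mu$ that meet the support of some element of $B$. By the previous step, $B$ is contained in $\overline{\spn\{\mathbf 1_A:A\in\Lambda\}}$, which is isometric to $\ell_{1}(\Lambda)$, and under this identification every vector of $B$ has finite support. Thus it suffices to prove the following purely $\ell_1$-theoretic statement: a closed subspace $B$ of $\ell_{1}(\Lambda)$ all of whose elements are finitely supported must be finite-dimensional (here $\Lambda$ may well be uncountable, which causes no difficulty).

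For this, suppose $\dim B=\infty$ and build inductively unit vectors $y_1,y_2,\dots\in B$ with pairwise disjoint finite supports: having chosen $y_1,\dots,y_k$, put $T_k=\bigcup_{i\le k}\supp(y_i)$; the coordinate restriction $P_k\colon\ell_{1}(\Lambda)\to\ell_{1}(T_k)$ has finite rank, so $\ker(P_k|_B)=\{x\in B:\supp(x)\cap T_k=\emptyset\}$ is infinite-dimensional, and any unit vector $y_{k+1}$ in it is finitely supported with support disjoint from $T_k$. Then $y:=\sum_{k\ge1}2^{-k}y_k$ converges in $\ell_{1}(\Lambda)$, lies in the closed subspace $B$, and has support $\bigcup_{k\ge1}\supp(y_k)$, which is infinite --- contradicting that $y$, as an element of $B$, should be finitely supported. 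Hence $B$ is finite-dimensional. I do not expect a genuine obstacle here: the only external ingredient is the identification of the one-dimensional $L$-summands of $L_1(\mu)$ with atoms, and once Theorem~\ref{ssdl1} has confined the SSD-points to finitely supported vectors, closedness and infinite-dimensionality of $B$ are immediately incompatible.
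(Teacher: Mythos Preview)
Your argument is correct. Both your proof and the paper's begin with Theorem~\ref{ssdl1} to confine every element of $B$ to a finite linear combination of generators of one-dimensional $L$-summands (equivalently, indicator functions of atoms), so that $B$ sits inside an $\ell_1(\Lambda)$ and consists of finitely supported vectors. The divergence is in how the contradiction is produced. The paper invokes the Baire category theorem: an infinite-dimensional Banach space cannot be a countable union of proper closed subspaces, so $B$ must contain a vector that is a genuinely infinite combination $\sum_{1}^{\infty}\alpha_i f_i$, which fails to be an SSD-point. You instead build the offending vector by hand, inductively choosing unit vectors $y_k\in B$ with pairwise disjoint finite supports (using only that the coordinate restriction to a finite set has finite rank) and summing $\sum 2^{-k}y_k$. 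Your route is slightly longer but entirely elementary and self-contained --- it avoids Baire and the implicit reduction to a countable family of atoms that the paper's one-line appeal to Baire needs to be made precise. The paper's route is terser once Baire is granted. Either way the substance is the same: closedness of $B$ forces it to contain an infinitely supported vector, contradicting Theorem~\ref{ssdl1}.
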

 \begin{proof} By  Theorem~\ref{ssdl1}, $B \subset\spn\{f\in L_1(\mu): \spn(f) \mbox{ is an $L$-summand in } L_1(\mu)\}$.  Since  $B$ is a Banach space, by an application of the Baire category theorem, if $B$ is infinite dimensional, $B$ contains an infinite combination $\sum_1^{\infty}\alpha_i f_i$ where $\spn(f_i)$ is an $L$-summand for all $i$. But such a combination is not an \textnormal{SSD}-point. Thus $B$ is a finite dimensional space.
 \end{proof}

As another application of Theorem~\ref{ssdl1}, we prove the following  result  from \cite{MR3314889}*{Proposition~3.20}.
\begin{thm}
	\label{spssd1}
	Let $X$ be an $L_1$-predual space and $Y$ be a finite co-dimensional subspace of $X$. Then $Y$ is strongly proximinal in $X$ if and only if $Y^\perp\subset \{x^*\in X^*: x^*\mbox{ is an \textnormal{SSD}-point of }X^*\}$.
\end{thm}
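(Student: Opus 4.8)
The plan is to prove both implications using the tools already developed, with Corollary~\ref{spssd} handling one direction for free. For the forward implication, suppose $Y$ is a finite co-dimensional strongly proximinal subspace of $X$. Then Corollary~\ref{spssd} immediately gives $Y^\perp\subset\{x^*\in X^*: x^*\text{ is an SSD-point of }X^*\}$, with no use of the $L_1$-predual hypothesis at all. So the substance of the theorem lies entirely in the converse.

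For the converse, assume $Y^\perp\subset\{x^*\in X^*: x^*\text{ is an SSD-point of }X^*\}$. Since $Y$ has finite co-dimension, $Y^\perp$ is finite dimensional; pick a basis $f_1,\ldots,f_n$ of $Y^\perp$, so that $Y=\bigcap_{i=1}^n\ker(f_i)$. Each $f_i$ is an SSD-point of $X^*$, and since $X$ is an $L_1$-predual space, Theorem~\ref{ssdl1} applies: each $f_i$ is a finite linear combination of unit vectors whose spans are $L$-summands in $X^*$. By Corollary~\ref{sumssd} the set of SSD-points of $X^*$ is a subspace, so in fact every element of $Y^\perp=\spn\{f_1,\ldots,f_n\}$ is an SSD-point; applying Theorem~\ref{ssdl1} to all of them at once (or just collecting the $L$-summand vectors appearing across the $f_i$), we find finitely many distinct unit vectors $g_1,\ldots,g_m\in X^*$ with $\spn(g_j)$ an $L$-summand in $X^*$ for each $j$, such that $Y^\perp\subset\spn\{g_1,\ldots,g_m\}$. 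Set $Z=\bigcap_{j=1}^m\ker(g_j)$, a finite co-dimensional subspace with $Z\subset Y$.

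It remains to show $Z$ is strongly proximinal in $X$ and then invoke the inclusion results. Each $\ker(g_j)$ is an $M$-ideal in $X$ (being the kernel of a functional whose span is an $L$-summand in $X^*$, i.e. $\spn(g_j)=\ker(g_j)^\perp$ is an $L$-summand). By \cite{MR1238713}*{Chapter~\Rmnum{1}, Proposition~1.11}, a finite intersection of $M$-ideals is an $M$-ideal, so $Z=\bigcap_{j=1}^m\ker(g_j)$ is an $M$-ideal in $X$. By Theorem~\ref{midealsp}, $Z$ is strongly proximinal in $X$. Finally, since $Z\subset Y\subset X$ with $Z$ finite co-dimensional and strongly proximinal, Corollary~\ref{spinc} yields that $Y$ is strongly proximinal in $X$, completing the proof.

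The main obstacle — really the only non-bookkeeping point — is making sure that from ``$Y^\perp$ consists of SSD-points'' one can extract a single finite family of $L$-summand-spanning vectors $g_1,\ldots,g_m$ whose kernels intersect inside $Y$; this is where Theorem~\ref{ssdl1} together with Corollary~\ref{sumssd} does the work, since without the subspace property one would only control the given basis $f_i$ rather than all of $Y^\perp$. Everything after that is an assembly of Proposition~\ref{midealinc}/Theorem~\ref{midealsp}-type facts and the inclusion corollaries, all of which are already available in the excerpt.
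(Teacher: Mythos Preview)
Your proof is correct and follows essentially the same route as the paper's own argument: one implication is Corollary~\ref{spssd}, and for the other you apply Theorem~\ref{ssdl1} to a basis of $Y^\perp$ to find finitely many $g_1,\ldots,g_m$ with $\spn(g_j)$ an $L$-summand, observe that $Z=\bigcap_j\ker(g_j)$ is an $M$-ideal contained in $Y$, and conclude via Theorem~\ref{midealsp} and Corollary~\ref{spinc}. One small remark: the invocation of Corollary~\ref{sumssd} is not actually needed---applying Theorem~\ref{ssdl1} to the basis vectors $f_i$ alone already yields the $g_j$'s with $Y^\perp=\spn\{f_1,\ldots,f_n\}\subset\spn\{g_1,\ldots,g_m\}$, which is all you use.
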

\begin{proof}
	Suppose $Y^\perp\subset \{x^*\in X^*: x^*\mbox{ is an \textnormal{SSD}-point of }X^*\}$. Then, by Theorem~\ref{ssdl1}, $Y^\perp\subset \spn\{x^*\in X^*: \spn(x^*) \mbox{ is an $L$-summand in } X^*\}$. Since $Y^\perp$ is finite dimensional, there exists $g_1, \ldots, g_k\in X^*$ such that $\spn(g_i)$ is an $L$-summand in $X^*$ and $\bigcap_{i=1}^{k}\ker(g_i)\subset Y\subset X$. Since $\bigcap_{i=1}^{k}\ker(g_i)$ is an $M$-ideal  in $X$, the strong proximinality of $Y$ in $X$ follows from Theorem~\ref{midealsp} and Corollary~\ref{spinc}.
	
	The other implication follows from Theorem~\ref{spssd}.
\end{proof}
To indicate the technique of non-unique preduals, we briefly indicate an alternate proof of the above result in the case of separable Banach spaces.
\begin{proof}
	Let  $X$ be a separable Banach space and $X^\ast = L_1(\mu)$ for some positive measure $\mu$.
	
	If $X^\ast$ is also separable, then it is easy to see that the set of  extreme points of the unit ball of $X^*$ is a countable discrete set (for any two independent  extreme points $x^\ast_1$ and $x^\ast_2$, as $\spn\{x^\ast_1\}$ and $\spn\{x^\ast_2\}$ are $L$-summands with $\spn\{x^\ast_1\} \cap \spn\{x^\ast_2\}=\{0\}$, $\|x^\ast_1 \pm x^\ast_2\| = 2$). Thus, by \cite{MR0493279}*{Section 22, Page 226, Theorem 5}, $X^\ast = \ell_1$. If $c$ denotes the space of convergent sequences with supremum norm, then $c^\ast = \ell_1$.

If $X^\ast$ is not separable, then the set of extreme points of the  unit ball of $X^*$ is uncountable. Therefore, again  by \cite{MR0493279}*{Section 22, Page 226, Theorem 5}, $X^\ast = C([0,1])^\ast$.

 In either case $X^\ast$ is the dual of a $C(K)$ space for some compact Hausdorff space.
Therefore since \textnormal{SSD}-points are independent of the predual, the conclusion follows from \cite{MR2308830}*{Corollary 2.3}.
\end{proof}
 \begin{rmk} It may be recalled that a long standing open problem in the isometric theory is that given an $L_1$-predual space $X$, is there a $C(K)$ space
 such that $C(K)^\ast = X^\ast$.
 \end{rmk}

Now using these ideas related to \textnormal{SSD}-points, we give a simple proof of \cite{MR3314889}*{Theorem~3.10} in the case of $L_1$-predual spaces.

\begin{cor}
	Let $X$ be an $L_1$-predual space and $Y$ be a finite co-dimensional subspace of $X$. Then $Y$ is strongly proximinal in $X$ if and only if $Y^{\perp\perp}$ is strongly proximinal in $X^{**}$.
\end{cor}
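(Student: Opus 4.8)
The plan is to reduce each side of the equivalence, via Theorem~\ref{spssd1}, to a statement about \textnormal{SSD}-points: the left-hand side to a statement inside $X^*$, and the right-hand side to a statement inside $X^{***}$, and then to observe that for a functional already lying in $X^*$ these two statements coincide.

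First I would record the preliminaries. Since $X$ is an $L_1$-predual space, by \cite{MR0493279}*{Section~21, Theorem~6} the space $X^{**}$ is isometric to a $C(K)$ space, hence is itself an $L_1$-predual space (with $(X^{**})^*=X^{***}$ an $L_1$-space), so that Theorem~\ref{spssd1} is applicable to $X^{**}$. Next, write $n=\dim(X/Y)=\dim Y^\perp<\infty$. Then $Y^{\perp\perp}=(Y^\perp)^\perp$ is a finite co-dimensional subspace of $X^{**}$ (indeed $X^{**}/Y^{\perp\perp}$ is isometric to $(Y^\perp)^*$, which is $n$-dimensional), and the annihilator of $Y^{\perp\perp}$ in $X^{***}$ is $n$-dimensional as well. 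Viewing $X^*$ inside $X^{***}$ via the canonical embedding, each $\varphi\in Y^\perp$ is weak-$*$ continuous on $X^{**}$ and vanishes on $Y$, hence on the weak-$*$ closure $Y^{\perp\perp}$ of $Y$; thus $Y^\perp\subset (Y^{\perp\perp})^\perp$, and comparing dimensions gives $(Y^{\perp\perp})^\perp=Y^\perp$ as subspaces of $X^{***}$.

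The heart of the matter is the observation that, for $\varphi\in X^*$, $\varphi$ is an \textnormal{SSD}-point of $X^*$ if and only if $\varphi$ is an \textnormal{SSD}-point of $X^{***}$. The forward implication is immediate from the definition: $X^*$ embeds isometrically in $X^{***}$, $\varphi$ belongs to $X^*$, and $B_{X^*}\subset B_{X^{***}}$, so the one-sided limit that is uniform over $B_{X^{***}}$ restricts to a one-sided limit uniform over $B_{X^*}$. For the reverse implication I would use that $X^*\cong L_1(\mu)$ is an $L$-summand in $X^{***}$ (exactly as in the proof of Theorem~\ref{ssdl1}, via \cite{MR1238713}*{Chapter~\Rmnum{4}, Example~1.1}); then \cite{MR1216708}*{Proposition~2.1} upgrades $\varphi$ from an \textnormal{SSD}-point of the $L$-summand $X^*$ to an \textnormal{SSD}-point of $X^{***}$.

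Putting these together yields the chain
\[
Y \text{ strongly proximinal in } X \iff Y^\perp\subset\{\textnormal{SSD}\text{-points of }X^*\} \iff Y^\perp\subset\{\textnormal{SSD}\text{-points of }X^{***}\},
\]
where the first equivalence is Theorem~\ref{spssd1} and the second is the observation above; and then, using $(Y^{\perp\perp})^\perp=Y^\perp$ inside $X^{***}=(X^{**})^*$ and applying Theorem~\ref{spssd1} to the $L_1$-predual space $X^{**}$ and its finite co-dimensional subspace $Y^{\perp\perp}$,
\[
Y^\perp\subset\{\textnormal{SSD}\text{-points of }(X^{**})^*\} \iff Y^{\perp\perp} \text{ strongly proximinal in } X^{**}.
\]
I expect the only mildly delicate point to be the bookkeeping that identifies $(Y^{\perp\perp})^\perp$ with $Y^\perp$ inside $X^{***}$ and confirms that Theorem~\ref{spssd1} genuinely applies to $X^{**}$; everything else is a direct appeal to results already in hand.
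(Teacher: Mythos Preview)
Your proof is correct and follows essentially the same route as the paper: reduce both sides via Theorem~\ref{spssd1} to \textnormal{SSD} conditions on $Y^\perp$ inside $X^*$ and inside $X^{***}$, identify $(Y^{\perp\perp})^\perp$ with $Y^\perp$, and bridge the two levels using that $X^*$ is an $L$-summand in $X^{***}$ together with \cite{MR1216708}*{Proposition~2.1}. One cosmetic slip to fix: in your key observation the labels ``forward'' and ``reverse'' are interchanged---the restriction argument (uniformity over $B_{X^{***}}$ gives uniformity over the smaller $B_{X^*}$) actually proves \textnormal{SSD} in $X^{***}$ $\Rightarrow$ \textnormal{SSD} in $X^*$, while the $L$-summand upgrade proves the opposite direction; both arguments are right, only the names are swapped.
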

\begin{proof}
	Suppose $Y$ is strongly proximinal in $X$. Let $Y=\bigcap_{i=1}^{n} \ker(f_i)$ for $f_1,\ldots, f_n\in X^*$. Then $Y^\perp=\spn\{f_i:i=1,\ldots,n\}$ is contained in set of all SSD-points of $X^*$. Since  $X^*$ is an $L$-summand in $X^{***}$, $F$ is contained in set all SSD-points of $X^{***}$. Since $Y^{\perp\perp}=\bigcap_{i=1}^{n} \ker(f_i)$ in $X^{**}=C(K)$ for some compact Hausdorff space $K$ and $Y^{\perp\perp\perp}=Y^{\perp}$ is contained in the set of all SSD-points of $X^{***}$, by Theorem~\ref{spssd1}, $Y^{\perp\perp}$ is strongly proximinal in $X^{**}$.
	
	Suppose $Y^{\perp\perp}$ is strongly proximinal in $X^{**}$. Again as above, let $Y=\bigcap_{i=1}^{n} \ker(f_i)$ for $f_1,\ldots, f_n\in X^*$. Then $Y^\perp=\spn\{f_i:i=1,\ldots,n\}$ in $X^*$.  Since $Y^{\perp\perp}=\bigcap_{i=1}^{n} \ker(f_i)$ is strongly proximinal in $X^{**}=C(K)$ for some compact Hausdorff space $K$, $Y^{\perp\perp\perp}$ is contained in set all of all SSD-points of $X^{***}$. Now as $Y^{\perp}$ is finite dimensional, $Y^\perp =Y^{\perp\perp\perp}$ and hence $Y^\perp$ is contained in set all SSD-points of $X^{***}$. Since $Y^\perp \subset X^*$, it follows that $Y^\perp$ is contained in set all SSD-points of $X^{*}$. Then, by Theorem~\ref{spssd1}, $Y$ is strongly proximinal in $X$.
\end{proof}
Our next lemma is motivated by the proof of \cite{MR2308830}*{Proposition~3.8}.
\begin{lem}
	\label{ssdext}
	Let $X$ be an $L_1$-predual space and $Y$ be a finite co-dimensional strongly proximinal subspace of $X$. If $\phi \in X^*$ is such that $\phi|_Y$ is a non-zero \textnormal{SSD}-point of $Y^*$, then $\phi$ is an \textnormal{SSD}-point of $X^*$.
\end{lem}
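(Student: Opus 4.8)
The plan is to interpose a finite co-dimensional $M$-ideal $Z$ of $X$ with $Z\subseteq Y$, and then carry the \textnormal{SSD}-property of $\phi$ from $Y^{*}$ down to $Z^{*}$ and back up to $X^{*}$, exploiting that an $M$-ideal makes its annihilator an $\ell_{1}$-summand of the dual. To produce $Z$: since $Y$ is finite co-dimensional and strongly proximinal in the $L_{1}$-predual $X$, Theorem~\ref{spssd1} places $Y^{\perp}$ inside the set of \textnormal{SSD}-points of $X^{*}$, and Theorem~\ref{ssdl1} then writes a basis of $Y^{\perp}$ in terms of finitely many unit vectors $g_{1},\dots,g_{k}\in X^{*}$ with each $\spn(g_{i})$ an $L$-summand in $X^{*}$; hence $Z:=\bigcap_{i=1}^{k}\ker(g_{i})\subseteq Y$ is of finite co-dimension in $X$, and since each $\ker(g_{i})$ is an $M$-ideal in $X$ (its annihilator $\spn(g_{i})$ is an $L$-summand), \cite{MR1238713}*{Chapter~\Rmnum{1}, Proposition~1.11} makes $Z$ an $M$-ideal in $X$; by Proposition~\ref{midealinc}, $Z$ is also an $M$-ideal in $Y$. (This is exactly the construction already used in the proof of Theorem~\ref{spssd1}.)

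The only genuine analytic input is an observation about $\ell_{1}$-decompositions: if $V=U\oplus_{1}W$ and $v\in V$ is an \textnormal{SSD}-point of $V$, then the $U$-component $v_{U}$ of $v$ is an \textnormal{SSD}-point of $U$ --- for $h$ in the unit ball of $U$ one has $\|v+th\|=\|v_{U}+th\|+\|v_{W}\|$, so $\bigl(\|v+th\|-\|v\|\bigr)/t=\bigl(\|v_{U}+th\|-\|v_{U}\|\bigr)/t$, and uniform convergence of the left-hand side over the unit ball of $V$ forces uniform convergence over that of $U$. (The converse --- that an \textnormal{SSD}-point of an $L$-summand is an \textnormal{SSD}-point of the ambient space --- is \cite{MR1216708}*{Proposition~2.1}, which I will also use.) Now apply this in $Y^{*}$: since $Z$ is an $M$-ideal in $Y$, we have $Y^{*}=Z^{\perp}_{Y^{*}}\oplus_{1}Q'$, where $Z^{\perp}_{Y^{*}}$ is the annihilator of $Z$ in $Y^{*}$ and $Q'$ is isometric to $Z^{*}=Y^{*}/Z^{\perp}_{Y^{*}}$ by restriction to $Z$. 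Decompose $\phi|_{Y}=\psi+\rho$ with $\psi\in Z^{\perp}_{Y^{*}}$ and $\rho\in Q'$; since $\psi$ vanishes on $Z$, the component $\rho$ restricts on $Z$ to $\phi|_{Z}$. As $\phi|_{Y}$ is an \textnormal{SSD}-point of $Y^{*}$, the observation makes $\rho$ an \textnormal{SSD}-point of $Q'$, and transporting through $Q'\cong Z^{*}$ gives that $\phi|_{Z}$ is an \textnormal{SSD}-point of $Z^{*}$.

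To push back up: since $Z$ is an $M$-ideal in $X$, we have $X^{*}=Z^{\perp}\oplus_{1}Q$ with $Q$ isometric to $Z^{*}$ by restriction to $Z$, and $Z^{\perp}$ finite dimensional. Decompose $\phi=a+b$ with $a\in Z^{\perp}$ and $b\in Q$; as before $b$ restricts on $Z$ to $\phi|_{Z}$, so $b$ corresponds to the \textnormal{SSD}-point $\phi|_{Z}$ under $Q\cong Z^{*}$, whence $b$ is an \textnormal{SSD}-point of $Q$ and therefore, $Q$ being an $L$-summand, of $X^{*}$ by \cite{MR1216708}*{Proposition~2.1}. On the other hand $a$ lies in the finite-dimensional space $Z^{\perp}$, so it is automatically an \textnormal{SSD}-point of $Z^{\perp}$ (the difference quotient decreases monotonically to the one-sided directional derivative as $t\downarrow 0^{+}$, uniformly on the compact unit ball by Dini's theorem), and hence an \textnormal{SSD}-point of $X^{*}$ by \cite{MR1216708}*{Proposition~2.1} again. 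Finally, $X$ is an $L_{1}$-predual, so by Corollary~\ref{sumssd} the \textnormal{SSD}-points of $X^{*}$ form a subspace; therefore $\phi=a+b$ is an \textnormal{SSD}-point of $X^{*}$, as required.

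The main obstacle is conceptual rather than computational: $Y$ itself need not be an $M$-ideal in $X$ --- its annihilator consists of \textnormal{SSD}-points but need not be an $L$-summand --- so neither Theorem~\ref{ssdl1} nor the $\ell_{1}$-splitting is available for $Y$ directly. The key realisation is that a slightly smaller subspace $Z\subseteq Y$ of the same finite co-dimension \emph{is} an $M$-ideal, which yields compatible $\ell_{1}$-decompositions of $Y^{*}$ and $X^{*}$ along which the \textnormal{SSD}-property can be funnelled $Y^{*}\to Z^{*}\to X^{*}$. Beyond that, the argument is bookkeeping, the sole piece of analysis being the one-line difference-quotient identity for $\ell_{1}$-summands.
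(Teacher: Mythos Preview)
Your proof is correct and follows essentially the same route as the paper: both construct the finite co-dimensional $M$-ideal $Z\subset Y$ from the $L$-summand generators of $Y^{\perp}$, split $Y^{*}$ and $X^{*}$ along $Z^{\perp}$, and transport the \textnormal{SSD}-property through the isometry with $Z^{*}$. The only cosmetic differences are that the paper cites \cite{MR2308830}*{Lemma~3.7} for the $\ell_{1}$-component observation (which you prove directly) and runs the transport $J_{1}^{\#}\to J^{*}\to J^{\#}$ as a contradiction argument via \cite{MR1851725}*{Lemma~1.1} rather than via the isometries, and recombines with \cite{MR2308830}*{Lemma~3.7} where you use Corollary~\ref{sumssd}; your version is slightly more self-contained.
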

\begin{proof}
	Let $F=\phi|_Y$ and $F$ be an SSD-point of $Y^*$. Suppose $\{\varphi_1,\ldots,\varphi_n\}$ is a basis of $Y^\perp$. Then, by Theorem~\ref{ssdl1}, $\varphi_i$ is an SSD-point of $X^*$ for all $i$. Then for every $i$, there exist $f_{i1},\ldots,f_{in_i}\in X^*$ and scalars $\alpha_{i1},\ldots,\alpha_{i{n_i}}$ such that $\varphi_i=\sum _{j=1}^{n_i} \alpha_{ij}f_{ij}$ and $\spn\{f_{ij}\}$ is an $L$-summand in $X^*$ for all $j$. Thus $\ker(f_{ij})$ is an $M$-ideal in $X$ for all $i$ and $j$. Therefore $J=\bigcap_{i,j}\ker(f_{ij}) $ is an $M$-ideal in $X$.  Thus $X^*=J^\perp \oplus_1 J^{\#}$, where $J^{\#}=\{x^*\in X^*:\|x^*\|=\|x^*|_J\|\}$ and $J^{\#}$ is isometric to $J^*$ (see \cite{MR1238713}*{Chapter~I, Proof of Proposition ~1.12}). Since $J\subset Y\subset X$, by Proposition~\ref{midealinc}, $J$ is also an $M$-ideal in $Y$. Therefore $Y^*=J_1^\perp \oplus_1 J_1^{\#}$, where $J_1^\perp=\{y^*\in Y^*: y^*|_J=0\}$ and   $J_1^{\#}=\{y^*\in Y^*:\|y^*\|=\|y^*|_J\|\}$.  Observe that $J_1^{\#}$ is  isometric to $J^*$ as $J$ is an $M$-ideal in $Y$.
	
	Let $F=F_1+F_2$ for $F_1\in J_1^\perp$ and $F_2\in J_1^{\#}$. Then, by \cite{MR2308830}*{Lemma~3.7}, $F_1$ and $F_2$ are SSD-points of $J_1^\perp$ and $J_1^{\#}$ respectively. Since $F_2$ is an SSD-point of $J_1^{\#}$, $F_2|_J$ attains its norm on $J$.
	
	Let $\phi=\phi_1+\phi_2$ for $\phi_1\in J^\perp$ and $\phi_2\in J^{\#}$. Since $\phi_1\in \spn_{i,j}\{f_{ij}\}$, by Theorem~\ref{ssdl1}, $\phi_1$ is an SSD-point of $J^\perp$.
	It remains to show that $\phi_2$ is an SSD-point of $J_1^{\#}$.
	
	If $\phi_2$ is not an SSD-point of $J_1^{\#}$, then $\phi_2|_J$ is not an SSD-point of $J^*$ (see \cite{MR1238713}*{Chapter~I, Proof of Proposition ~1.12}). Then, by  \cite{MR1851725}*{Lemma~1.1} there exists an $\varepsilon>0$ and a sequence $(h_n)$ of unit vectors from $J$ such that $\phi_2(h_n)\to \|\phi_2\|$ and $d(h_n, \{x\in J: \|x\|=1 \mbox{ and } \phi_2(x)=\|\phi_2\|\})\ge \varepsilon$. Clearly $\phi_2(h)=\phi(h)=F_2(h)=F(h)$ for all $h\in J$. Hence $\lim_n F_2(h_n)=\lim_n \phi_2(h_n)=\|\phi_2\|=\|\phi_2|_J\|=\|F_2|_J\|=\|F_2\|$ and $d(h_n, \{x\in J: \|x\|=1 \mbox{ and } F_2(x)=\|F_2\|\})\ge \varepsilon$. Thus, by \cite{MR1851725}*{Lemma~1.1}, $F_2|_J$ is not an SSD-point of $J^*$. This contradicts the fact that $F_2$ is an SSD-point of $J_1^{\#}$. Thus $\phi_2$ is an SSD-point of $J^{\#}$ and hence, by \cite{MR2308830}*{Lemma~3.7}, $\phi$ is an SSD-point of $X^*$.
	\end{proof}

From Theorem~\ref{spssd1}, it follows that if $X$ is an $L_1$-predual space and $F\subset X^*$ is a finite dimensional subspace consisting  of SSD-points of $X^*$, then the pre-annihilator $Z = \bigcap_{f \in F}\ker(f)$ is a strongly proximinal subspace of $X$. We extend this result to any finite co-dimensional strongly proximinal subspaces of an $L_1$-predual space.

\begin{thm}
	\label{trans}
	Let $X$ be an $L_1$-predual space and $Y$ be a finite co-dimensional strongly proximinal subspace of $X$. Suppose $F \subset Y^*$ is a finite dimensional subspace consisting  of \textnormal{SSD}-points of $Y^*$. Then $Z = \bigcap_{f \in F} \ker(f)$ is a strongly proximinal subspace of $X$ and hence is a strongly proximinal subspace of $Y$.
\end{thm}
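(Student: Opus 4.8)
The plan is to reduce the problem to a situation where Lemma~\ref{ssdext} can be applied coordinate-wise, and then invoke the results already established for finite co-dimensional strongly proximinal subspaces of an $L_1$-predual space. First I would note that $Y$, being a finite co-dimensional strongly proximinal subspace of the $L_1$-predual space $X$, has $Y^\perp$ contained in the SSD-points of $X^*$ by Theorem~\ref{spssd}; moreover $Y$ itself is an $L_1$-predual space (this is a standard fact: finite co-dimensional subspaces of $L_1$-preduals are $L_1$-preduals, which can also be read off from the $M$-ideal structure exhibited in the proof of Lemma~\ref{ssdext}, namely $Y^* = J_1^\perp \oplus_1 J_1^\#$ with $J_1^\#$ isometric to $J^*$ and $J$ an $M$-ideal in both $X$ and $Y$). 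Since $F \subset Y^*$ is finite dimensional and consists of SSD-points of $Y^*$, Theorem~\ref{spssd1} applied to the $L_1$-predual space $Y$ tells us immediately that $Z = \bigcap_{f\in F}\ker(f)$ is strongly proximinal \emph{in $Y$}. So the real content is upgrading this to strong proximinality of $Z$ \emph{in $X$}.

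The key step is the following: choose a basis $\{f_1,\ldots,f_m\}$ of $F \subset Y^*$; each $f_j$ is an SSD-point of $Y^*$, so by Lemma~\ref{ssdext} each $f_j$ extends to an SSD-point $\phi_j$ of $X^*$ (take any Hahn--Banach extension $\phi_j$ of $f_j$; the hypothesis of Lemma~\ref{ssdext} is exactly that $\phi_j|_Y = f_j$ is a non-zero SSD-point of $Y^*$). Now consider the subspace $W = \spn\{\phi_1,\ldots,\phi_m\} + Y^\perp \subset X^*$. This is finite dimensional. I claim it consists of SSD-points of $X^*$: indeed, by Theorem~\ref{ssdl1} (and Corollary~\ref{sumssd}, the set of SSD-points of a dual $L_1$-predual being a subspace) it suffices that each $\phi_j$ and each basis vector of $Y^\perp$ be an SSD-point of $X^*$ — the former by Lemma~\ref{ssdext}, the latter by Theorem~\ref{spssd} applied to $Y$. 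Hence by Theorem~\ref{spssd1}, $W_\perp := \bigcap_{g\in W}\ker(g)$ is strongly proximinal in $X$.

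Finally I would identify $W_\perp$ with $Z$. We have $W_\perp = \left(\bigcap_{j=1}^m \ker\phi_j\right) \cap \left(\bigcap_{g\in Y^\perp}\ker g\right) = \left(\bigcap_{j=1}^m \ker\phi_j\right) \cap Y$. For $y \in Y$ one has $\phi_j(y) = f_j(y)$, so $\ker\phi_j \cap Y = \ker f_j$ (the kernel taken in $Y$), whence $W_\perp = \bigcap_{j=1}^m \ker f_j = Z$. Thus $Z$ is strongly proximinal in $X$. That $Z$ is then strongly proximinal in $Y$ follows either from the first paragraph or, alternatively, from Corollary~\ref{spinc}: $Z \subset Y \subset X$ with $Z$ a finite co-dimensional strongly proximinal subspace of $X$.

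The main obstacle is the passage from SSD-points of $Y^*$ to SSD-points of $X^*$, i.e.\ the dependence on Lemma~\ref{ssdext}; everything else is bookkeeping with annihilators and the structural theorems already in hand. One subtlety worth checking is that the Hahn--Banach extensions $\phi_j$ need not be canonical, but this is harmless: Lemma~\ref{ssdext} applies to \emph{any} $\phi \in X^*$ with $\phi|_Y$ a non-zero SSD-point of $Y^*$, so any choice works, and the identification $W_\perp = Z$ in the last step does not depend on which extensions were chosen. A second point to verify is that $F \subset Y^*$ consisting of SSD-points really does force each \emph{individual} basis vector $f_j$ to be a non-zero SSD-point (non-zero being needed in Lemma~\ref{ssdext}); if some $f_j = 0$ we simply drop it, as $\ker(0) = Y$ contributes nothing to the intersection.
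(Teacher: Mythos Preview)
Your main argument is correct and is essentially the paper's own proof: pick extensions $\phi_j \in X^*$ of a basis of $F$, use Lemma~\ref{ssdext} to see each $\phi_j$ is an SSD-point of $X^*$, combine with Corollary~\ref{spssd} and Corollary~\ref{sumssd} to conclude that the annihilator of $Z$ in $X^*$ (your $W$) consists of SSD-points, and finish with Theorem~\ref{spssd1}. The paper phrases this slightly more compactly by writing $Z = \bigcap_i \ker f_i$ with $f_i \in X^* \setminus Y^\perp$ directly, but the content is identical.

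Two side remarks, neither of which affects your main line. First, the aside that ``$Y$ itself is an $L_1$-predual space (this is a standard fact)'' is not correct in general: the paper's subsequent theorem characterizing which finite co-dimensional strongly proximinal subspaces of an $L_1$-predual are again $L_1$-preduals shows that extra conditions on a basis of $Y^\perp$ are required (e.g.\ $Y = \ker(\tfrac{1}{2}\delta_0 + \tfrac{1}{2}\delta_1)$ in $C[0,1]$ fails them). Since you do not actually use this claim, simply drop that paragraph. Second, your appeal to Corollary~\ref{spinc} at the end is a misreading: that corollary concludes that the \emph{larger} space $Y$ is strongly proximinal in $X$, not that $Z$ is strongly proximinal in $Y$. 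The deduction you want is in fact trivial and needs no citation: if $Z$ is strongly proximinal in $X$ and $Z \subset Y \subset X$, then any minimizing sequence for $y \in Y$ in $Z$ is already a minimizing sequence in $X$ (distances are computed the same way), so strong proximinality in $X$ restricts to strong proximinality in $Y$.
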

\begin{proof}
	Let $Z = \bigcap_{f \in F}\ker(f)$. Then $Z^\perp =F$.  Since $Y$ is of finite co-dimension in $X$,  without loss of generality we may assume that
	$Z = \bigcap_{1 \leq i \leq n}\ker f_i$ for some $f_i \in X^*$ and $f_i \notin Y^\perp$. Thus $f_i|_Y \in Z^\perp=F$ and hence $f_i|_Y$ is an \textnormal{SSD}-point of $Y^*$.
	
	Now by Lemma~\ref{ssdext}, we get that $f_i$ is an \textnormal{SSD}-point of $X^\ast$ as well. Thus, by Corollary~\ref{sumssd}, annihilator $Z^\perp$ of $Z$ in $X^*$ consists of SSD-points of $X^*$  and hence, by Theorem~\ref{spssd1}, we get that Z is strongly proximinal in X and hence a strongly proximinal subspace of $Y$.
\end{proof}

The following result is an immediate consequence of Theorem~\ref{trans}.
\begin{cor} Let $X$ be an $L_1$-predual space. Then being strongly proximinal is a transitive property for finite co-dimensional closed subspaces  of $X$. Moreover, intersection of finitely many strongly proximinal subspaces of finite co-dimension is again a strongly proximinal subspace.
\end{cor}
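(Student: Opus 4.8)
The plan is to read off both assertions directly from Theorem~\ref{trans}, using in addition only Corollary~\ref{sumssd} and Theorem~\ref{spssd1} for the statement about intersections. The one point that needs care is that an intermediate subspace $Y$ of finite co-dimension in $X$ is in general not itself an $L_1$-predual space, so one cannot apply Theorem~\ref{spssd1} to $Y$ directly; Theorem~\ref{trans} (which ultimately rests on Lemma~\ref{ssdext}) is precisely the device that circumvents this.

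\emph{Transitivity.} Let $Z\subseteq Y\subseteq X$ be closed subspaces of finite co-dimension such that $Z$ is strongly proximinal in $Y$ and $Y$ is strongly proximinal in $X$. First I would note that, $Z$ being of finite co-dimension in $Y$, its annihilator $F:=\{g\in Y^*: g|_Z=0\}$ is a finite dimensional subspace of $Y^*$ and $Z=\bigcap_{g\in F}\ker(g)$ since $Z$ is closed. Because $Z$ is strongly proximinal in the Banach space $Y$ — and Corollary~\ref{spssd} carries no $L_1$-predual hypothesis — applying Corollary~\ref{spssd} with $Y$ in the role of the ambient space shows that $F$ consists of \textnormal{SSD}-points of $Y^*$. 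Now Theorem~\ref{trans}, applied to this $F\subset Y^*$, yields that $Z=\bigcap_{g\in F}\ker(g)$ is strongly proximinal in $X$ (and, as recorded there, in $Y$), which is exactly the transitivity claim.

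\emph{Finite intersections.} By an obvious induction it suffices to treat the intersection of two strongly proximinal subspaces $Y_1,Y_2$ of finite co-dimension in $X$. By Corollary~\ref{spssd}, each annihilator $Y_i^\perp\subseteq X^*$ consists of \textnormal{SSD}-points of $X^*$. These annihilators are finite dimensional, hence $Y_1^\perp+Y_2^\perp$ is (norm and weak$^*$) closed and therefore $(Y_1\cap Y_2)^\perp=Y_1^\perp+Y_2^\perp$. By Corollary~\ref{sumssd} the set of \textnormal{SSD}-points of $X^*$ is a linear subspace, whence $(Y_1\cap Y_2)^\perp$ consists of \textnormal{SSD}-points of $X^*$. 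Since $Y_1\cap Y_2$ is of finite co-dimension in $X$, Theorem~\ref{spssd1} gives that $Y_1\cap Y_2$ is strongly proximinal in $X$, and the induction completes the general case.

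Since each step is a citation of an earlier result, there is no genuine obstacle here; the only things to stay vigilant about are keeping straight which ambient space every annihilator and every ``\textnormal{SSD}-point'' refers to, and observing — for the intersection statement — that the finite-dimensionality of the annihilators is what upgrades the general identity $(A\cap B)^\perp=\overline{A^\perp+B^\perp}^{\,w^*}$ to the exact equality $(Y_1\cap Y_2)^\perp=Y_1^\perp+Y_2^\perp$ needed above.
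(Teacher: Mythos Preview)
Your argument is correct and is precisely the unpacking the paper has in mind when it declares the corollary an ``immediate consequence of Theorem~\ref{trans}'': the transitivity clause is read off verbatim from Theorem~\ref{trans} after invoking Corollary~\ref{spssd} in the intermediate space $Y$, and your treatment of finite intersections via $(Y_1\cap Y_2)^\perp=Y_1^\perp+Y_2^\perp$ together with Corollary~\ref{sumssd} and Theorem~\ref{spssd1} is the natural direct route using the same ingredients that feed into Theorem~\ref{trans}.
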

An interesting question that arises from Theorem~\ref{trans} is that in an $L_1$-predual space $X$,  if one can characterize strongly proximinal subspaces
of finite co-dimension that are again $L_1$-preduals. Our next theorem answers this question. Since being strongly proximinal is a relative property of the subspace $Y$ in $X$ and being an $L_1$-predual is a geometric property of the individual space $Y$, our surprising answer is based on the choice of a specific type of basis for $Y^\bot$.

\begin{thm}
Let $X$ be an $L_1$-predual space and  $Y$ be a  strongly proximinal subspace of finite co-dimension $n$ in $X$. Let $\{\Lambda_1,\ldots,\Lambda_n\}\subset S_{X^*}$
be a basis of  $Y^\bot \subset \ell^1(k) $ for some $k>0$ and $X^\ast = \ell^1(k)\bigoplus_1 N$ for some closed subspace $N$ of $X^*$. 
Suppose $\Lambda_i \in \conv\{\pm e_j:  \mbox{both } e_j \mbox{ and } -e_j ~\mbox{ do not occur simultaneously} \\\mbox{ in the convex combination},~1 \leq j\leq k\}$ for $1 \leq i \leq n$,  where $\conv$ denotes the convex hull. Assume that in the convex combination corresponding to each $\Lambda_i$, there is an $e_j$ or $-e_j$, which is different for each $i$ and  the coefficients of such $e_j$ or $-e_j$ in the convex combination for each $\Lambda_i$ are also distinct and are in $(\frac{1}{2},1]$. Then $Y$ is an $L_1$-predual space.

Conversely if $Y$ is a strongly proximinal subspace of finite codimension in $X$ and is also an  $L_1$-predual space, then  $Y^\bot$ admits a basis of the above form.
\end{thm}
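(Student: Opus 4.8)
The plan is to strip away the Banach-space scaffolding and reduce the whole statement to an isometric fact about quotients of $\ell^1(k)$. First I would note that, since $X$ is an $L_1$-predual, $X^{\ast}=L_1(\mu)$; since $Y$ is strongly proximinal of finite co-dimension $n$, Theorem~\ref{spssd1} gives that $Y^{\bot}$ consists of \textnormal{SSD}-points of $X^{\ast}$, and then the proposition above on finite dimensional subspaces of a dual $L_1$-space (applied to $X^{\ast}$ and $Y^{\bot}$) supplies a decomposition $X^{\ast}=\ell^1(k)\bigoplus_1 N$ with $Y^{\bot}\subset\ell^1(k)$, the $e_j$ being atoms and $N$, as an $L$-summand of the $L_1$-space $X^{\ast}$, being itself an $L_1$-space. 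Because $Y^{\bot}\subset\ell^1(k)$, one checks that isometrically
\[
 Y^{\ast}=X^{\ast}/Y^{\bot}=\bigl(\ell^1(k)/Y^{\bot}\bigr)\bigoplus_1 N .
\]
As $N$ is already an $L_1$-space and $\ell^1(k)/Y^{\bot}$ is finite dimensional, $Y^{\ast}$ is an $L_1$-space — equivalently $Y$ is an $L_1$-predual — exactly when $\ell^1(k)/Y^{\bot}$ is isometric to $\ell^1(m)$, $m=k-n$ (a finite dimensional $L_1$-space is an $L$-summand of some $L_1(\rho)$, hence of the form $L_1(\rho|_S)$, so is an $\ell^1$ of its atoms). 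The same reduction applies to the decomposition occurring in the hypothesis. Thus the task becomes: $Y^{\bot}$ admits a basis of the stated form $\iff\ \ell^1(k)/Y^{\bot}\cong\ell^1(m)$.

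For the forward implication I would begin from a basis $\Lambda_1,\dots,\Lambda_n$ of the stated form; after relabelling $e_1,\dots,e_k$ (an $\ell^1$-isometry preserving the decomposition) I may assume the distinguished coordinate of $\Lambda_i$ is $e_i$, so $\Lambda_i=\sum_j\lambda_{ij}e_j$ with $|\lambda_{ii}|=c_i\in(\tfrac12,1]$ and $\sum_{j\ne i}|\lambda_{ij}|=1-c_i<\tfrac12$. Set $B=(\lambda_{ip})_{1\le i,p\le n}$ and $C=(\lambda_{ij})_{1\le i\le n,\,n<j\le k}$, and let $\alpha_i,\beta_i$ be the $\ell^1$-mass of the off-diagonal part of the $i$-th row of $B$ and of the $i$-th row of $C$, so $\alpha_i+\beta_i=1-c_i$. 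Then $B$ is strictly row-diagonally dominant ($c_i>1-c_i\ge\alpha_i$), hence invertible, and $\ell^1(k)=Y^{\bot}\oplus\spn\{e_{n+1},\dots,e_k\}$. In $\ell^1(k)/Y^{\bot}$ the relations $\overline{\Lambda}_i=0$ read $B\vec b+C\vec d=0$ with $\vec b=(\overline e_1,\dots,\overline e_n)^{T}$, $\vec d=(\overline e_{n+1},\dots,\overline e_k)^{T}$, so $\vec b=-B^{-1}C\vec d$. Since $\{\overline e_j:n<j\le k\}$ is a basis of the quotient, the quotient ball $\conv\{\pm\overline e_j:1\le j\le k\}$ coincides with $\conv\{\pm\overline e_j:n<j\le k\}$ — so that $\ell^1(k)/Y^{\bot}\cong\ell^1(m)$ — exactly when each $\overline e_i$ ($i\le n$) lies in the latter set, i.e.\ when $\|B^{-1}C\|_{\infty\to\infty}\le 1$. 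For that, if $Bu=Cw$ and $|u_i|=\|u\|_{\infty}$ then
\[
 c_i|u_i|=|B_{ii}u_i|\le|(Bu)_i|+\sum_{p\ne i}|B_{ip}|\,|u_p|=|(Cw)_i|+\sum_{p\ne i}|B_{ip}|\,|u_p|\le\beta_i\|w\|_{\infty}+\alpha_i|u_i| ,
\]
so $(2c_i-1+\beta_i)|u_i|=(c_i-\alpha_i)|u_i|\le\beta_i\|w\|_{\infty}$, and since $2c_i-1>0$ this forces $|u_i|\le\|w\|_{\infty}$; hence $\|B^{-1}C\|_{\infty\to\infty}\le 1$, $\ell^1(k)/Y^{\bot}\cong\ell^1(m)$, and $Y$ is an $L_1$-predual.

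For the converse I would run this backwards. Assuming $Y$ is an $L_1$-predual, the reduction gives $\ell^1(k)/Y^{\bot}\cong\ell^1(m)$, so the quotient ball $\conv\{\pm\overline e_j:1\le j\le k\}$ is an $m$-dimensional cross-polytope; its $2m$ vertices occur among the $\pm\overline e_j$, so I can choose $J_0\subset\{1,\dots,k\}$ with $|J_0|=m$ such that $\|\sum_{j\in J_0}s_j\overline e_j\|=\sum_{j\in J_0}|s_j|$. For each of the $n$ indices $j\notin J_0$, writing $\overline e_j=\sum_{j'\in J_0}\gamma_{jj'}\overline e_{j'}$ gives $\sum_{j'}|\gamma_{jj'}|=\|\overline e_j\|\le 1$ and $e_j-\sum_{j'\in J_0}\gamma_{jj'}e_{j'}\in Y^{\bot}$; normalising these vectors produces a basis $\{\Lambda_j:j\notin J_0\}$ of $Y^{\bot}$ in $S_{X^{\ast}}$ with pairwise distinct distinguished coordinates $e_j$ and coefficients $c_j=(1+\|\overline e_j\|)^{-1}\in[\tfrac12,1]$. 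To reach the stated form one must then make the $c_j$ pairwise distinct and keep them in $(\tfrac12,1]$, which I would arrange by replacing each $\Lambda_j$ by a small perturbation $(\Lambda_j+\varepsilon\sum_{j''\ne j}t_{j''}\Lambda_{j''})/\|\cdot\|_1$ inside $Y^{\bot}$ (this keeps $e_j$ distinguished and moves $c_j$ continuously), iterating over $j$ after a suitable initial choice of $J_0$.

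The analytic heart is the diagonal-dominance estimate $\|B^{-1}C\|_{\infty\to\infty}\le 1$ of the forward direction, which is exactly where the strict inequality $c_i>\tfrac12$ (and not merely $c_i\ge\tfrac12$) is needed. I expect the main obstacle to be the converse: producing the decomposition and the basis so that the finitely many coefficients $(1+\|\overline e_j\|)^{-1}$ are separated and lie in $(\tfrac12,1]$ forces one to choose $J_0$ and the perturbations with care, and this is precisely the point where the relative property (strong proximinality of $Y$ in $X$) and the intrinsic property ($Y$ being an $L_1$-predual) must be reconciled.
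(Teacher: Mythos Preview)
Your route is genuinely different from the paper's. The paper passes to the bidual: it uses that $X$ is an $L_1$-predual iff $X^{**}$ is a $C(K)$ space, writes $X^{**}=\ell^{\infty}(k)\bigoplus_{\infty}N^{*}$, and observes that $Y$ is an $L_1$-predual exactly when $Y^{\perp\perp}$ is the range of a norm-one projection in $X^{**}$, which in turn reduces to $Y^{\perp\perp}\cap\ell^{\infty}(k)$ being $1$-complemented in $\ell^{\infty}(k)$; both directions are then read off from Baronti's theorem on norm-one projections onto subspaces of $\ell^{\infty}$. You instead stay in the dual, reduce to whether the finite-dimensional quotient $\ell^{1}(k)/Y^{\perp}$ is a cross-polytope, and settle the forward direction by an explicit diagonal-dominance estimate $\|B^{-1}C\|_{\infty\to\infty}\le 1$. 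The two reductions are formally dual (a subspace of $\ell^{\infty}(k)$ is $1$-complemented iff the corresponding quotient of $\ell^{1}(k)$ is an $\ell^{1}$-space), but your forward argument is self-contained where the paper simply cites Baronti, and your computation makes visible exactly which hypothesis --- each $c_i>\tfrac12$ --- carries the weight; in particular it shows that the \emph{distinctness} of the $c_i$ plays no role in that implication.

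The converse, however, has a gap that your perturbation sketch does not close. Your construction yields $c_j=(1+\|\overline e_j\|)^{-1}\in[\tfrac12,1]$, and you propose to perturb within $Y^{\perp}$ to push each $c_j$ into $(\tfrac12,1]$ and separate them. But the boundary value $c_j=\tfrac12$ is sometimes unavoidable: take $X=\ell^{\infty}(2)$ and $Y=\ker(e_1+e_2)$, so $Y^{\perp}=\spn\{\tfrac12(e_1+e_2)\}$; here $\ell^{1}(2)/Y^{\perp}\cong\ell^{1}(1)$ and $Y\cong\mathbb{R}$ is an $L_1$-predual, yet the only unit vector in $Y^{\perp}$ (up to sign) has both coordinates equal to $\tfrac12$, so no basis with a distinguished coefficient in $(\tfrac12,1]$ exists for this decomposition. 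Thus the strict inequality cannot be manufactured by perturbing inside $Y^{\perp}$, and the paper's proof --- which for the converse merely invokes Baronti --- is equally silent on this point. Your perturbation idea \emph{is} adequate for achieving pairwise distinctness of the $c_j$ once they lie in an open interval, but it cannot repair the $c_j=\tfrac12$ case; at best your argument (and the paper's) establishes the converse with $[\tfrac12,1]$ in place of $(\tfrac12,1]$.
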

\begin{proof}
To fix ideas, we will first work with $n = 1$ and then indicate how the general result follows.

Let $\Lambda = \sum_1^k \lambda_i e_i  \in \ell^1(k)$, for some $\lambda_i \in [0,1]$ with $\sum_1^k \lambda_i = 1 = \|\Lambda\|$.
Assume that  for some $i_0 >0$, $\lambda_{i_0} > \frac{1}{2}$.

We recall that a Banach space is an $L_1$-predual space if and only if it's bidual is isometric to a space of continuous functions on a compact set.
We will first show that   $ker(\Lambda)$ is the range of a projection of norm one in $X^{\ast\ast} = \ell_{\infty}(k) \bigoplus_{\infty} N^\ast$. This is clearly equivalent to showing $ker (\lambda_1,..., \lambda_k) \subset \ell_{\infty}(k)$ is the range of a projection of norm one in $\ell_{\infty}(k)$.
Since $\lambda_{i_0} > \frac {1}{2}$,  it follows from the Theorem of   \cite{MR960402} that $ker(\lambda_1,...,\lambda_k)$ is the range of a projection of norm one in $\ell_{\infty}(k)$.  Since $X$ is an $L_1$-predual space and $ker(\Lambda)\subset X^{**}$  is the range of a  projection of norm one in $X^{**}$, it follows from \cite{MR0493279}*{Theorem~6 of Section~21 and Theorem~2 of Section~11} that $ker(\Lambda) \subset  X$  is an $L^1$-predual space.

The converse part of the theorem is also proved in a similar way, by first noting that if $Y$ is an $L^1$-predual space, then $Y^{\bot\bot}$, being a dual space and a $C(K)$-space for some compact Hausdorff space $K$, is the range of a projection of norm one in $X^{**}= \ell^\infty (k) \bigoplus_{\infty} N^*$ (see \cite{MR0493279}*{Section~11 and Section~21}). By the structure of $Y^{\bot\bot}$ , this again implies that  $Y^{\bot\bot} \cap \ell^\infty (k)$ is the range of a projection of norm one in $\ell^\infty (k)$. Thus the choice of a basis for $Y^{\bot\bot}$ follows from the Theorem of   \cite{MR960402}.

Now if $Y$ is of codimension $n$, from our earlier analysis we have reduced the problem to conditions under which finite dimensional subspaces of $\ell_{\infty}(k)$ are ranges of projections of norm one. By again appealing to the Theorem of   \cite{MR960402}, we see that the conditions in the statement of the theorem are equivalent to the requirement for ranges of projections of norm one in $\ell_\infty(k)$.
\end{proof}
The next corollary follows from the arguments given above, in conjugation with our earlier results.

\begin{cor}
Let $X$ be an $L_1$-predual space. Then intersection of finitely many subspaces of finite codimension which are both $L_1$-preduals and strongly proximinal subspaces of $X$, is again an $L_1$-predual and a strongly proximinal subspace of $X$.
\end{cor}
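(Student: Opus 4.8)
The plan is to split the statement into its two halves. The strong‑proximinality half is immediate: writing $Y_1,\dots,Y_m$ for the given subspaces and $Y=\bigcap_{i=1}^m Y_i$, the subspace $Y$ has finite codimension and, by the Corollary to Theorem~\ref{trans}, is strongly proximinal in $X$. Moreover, since the intersection of two members of our class is again a subspace of finite codimension that is strongly proximinal (again the Corollary to Theorem~\ref{trans}), an induction on $m$ reduces the $L_1$‑predual half to the case $m=2$: once $\bigcap_{i<m}Y_i$ is known to be a strongly proximinal $L_1$‑predual of finite codimension, $\bigcap_{i\le m}Y_i$ is the intersection of it with $Y_m$. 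So from now on $Y=Y_1\cap Y_2$ and the task is to show that $Y$ is an $L_1$‑predual.

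For this I would re‑run the analysis in the proof of the theorem preceding this corollary. For a strongly proximinal subspace $W$ of finite codimension in $X$ one has $W^\perp\subset\ell^1(k)\subset X^*$ with $X^*=\ell^1(k)\oplus_1 N$ for some $k$, hence $X^{**}=\ell^\infty(k)\oplus_\infty N^*$, and $W$ is an $L_1$‑predual if and only if $W^{\perp\perp}\cap\ell^\infty(k)$ is the range of a norm‑one projection of $\ell^\infty(k)$ --- equivalently, if and only if $W^\perp$ is cut out inside $\ell^1(k)$ by a family of functionals of the special diagonally‑dominant form described there. Since $Y^\perp=Y_1^\perp+Y_2^\perp$ is a finite‑dimensional subspace of SSD‑points of $X^*$ (Theorem~\ref{spssd1}), we may fix a single $k$ with $Y^\perp\subset\ell^1(k)$ and $X^*=\ell^1(k)\oplus_1 N$, so that $X^{**}=\ell^\infty(k)\oplus_\infty N^*$ and
\[
Y^{\perp\perp}\cap\ell^\infty(k)=\bigl(Y_1^{\perp\perp}\cap\ell^\infty(k)\bigr)\cap\bigl(Y_2^{\perp\perp}\cap\ell^\infty(k)\bigr),
\]
where, by the hypothesis on the $Y_i$ and the preceding theorem, each factor $V_i:=Y_i^{\perp\perp}\cap\ell^\infty(k)$ is the range of a norm‑one projection of $\ell^\infty(k)$ and $Y_i^\perp$ is cut out by a special family $\mathcal B_i\subset\ell^1(k)$. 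Thus everything reduces to showing that $V_1\cap V_2$ is again the range of a norm‑one projection of $\ell^\infty(k)$, equivalently that $Y^\perp=Y_1^\perp+Y_2^\perp$ is cut out by a single special family.

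For this last step I would appeal to the Theorem of \cite{MR960402}: the property that a prescribed finite family of functionals cuts out the range of a norm‑one projection of $\ell^\infty(k)$ is governed by a combinatorial condition that is preserved once the families $\mathcal B_1$ and $\mathcal B_2$ are placed in ``general position'' relative to the coordinate basis, and producing such a position is exactly what the basis choice in the preceding theorem accomplishes. Concretely, using the freedom to enlarge $k$ by adjoining further coordinates with arbitrarily small coefficients, and to replace each $\Lambda\in\mathcal B_i$ by a convex combination having the same kernel, one arranges that the dominant coordinates used in $\mathcal B_1$ and in $\mathcal B_2$ are pairwise disjoint and that all the dominant coefficients are distinct and lie in $(\tfrac12,1]$; then $\mathcal B_1\cup\mathcal B_2$ is a special family spanning $Y^\perp$, and the ``if'' direction of the preceding theorem (that is, the Theorem of \cite{MR960402}) produces a norm‑one projection of $\ell^\infty(k)$ onto $V_1\cap V_2$ --- for instance as the product of two suitably chosen commuting norm‑one projections associated with $\mathcal B_1$ and $\mathcal B_2$. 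Transporting this projection through $X^{**}=\ell^\infty(k)\oplus_\infty N^*$ gives a norm‑one projection of $X^{**}$ onto $Y^{\perp\perp}=Y^{**}$, whence $Y^{**}$ is a $C(K')$‑space and, by \cite{MR0493279}*{Theorem~6 of Section~21 and Theorem~2 of Section~11}, $Y$ is an $L_1$‑predual. The genuinely delicate point is precisely this general‑position arrangement: a priori the special bases of $Y_1^\perp$ and $Y_2^\perp$ may share dominant coordinates or coefficients, and one has to verify that the available freedom --- in $k$, in which $L$‑summands of $X^*$ play the role of coordinates, and in the choice of the special bases --- always suffices to remove every clash while still spanning $Y_1^\perp+Y_2^\perp$ by one special family. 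Everything else is routine bookkeeping with the $\ell^\infty$‑decomposition, together with Theorem~\ref{trans}, Theorem~\ref{spssd1}, and the preceding theorem.
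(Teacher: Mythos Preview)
Your overall strategy matches what the paper (tersely) intends: the strong-proximinality half is immediate from the Corollary following Theorem~\ref{trans}, and for the $L_1$-predual half one reduces, via the preceding theorem, to showing that $Y^{\perp\perp}\cap\ell^\infty(k)=V_1\cap V_2$ is again the range of a norm-one projection in $\ell^\infty(k)$, which by Baronti's theorem is equivalent to $Y^\perp=Y_1^\perp+Y_2^\perp$ admitting a basis of the special diagonally-dominant form. The paper records no argument beyond ``follows from the arguments given above,'' so you have in fact gone further than the text.

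There is, however, a genuine gap in your general-position step. The functionals $\Lambda\in Y_i^\perp$ are fixed elements of $X^*$; you may enlarge $k$ by throwing more one-dimensional $L$-summands into $\ell^1(k)$, but this does not add new nonzero coordinates to any $\Lambda$, and ``replacing $\Lambda$ by a convex combination with the same kernel'' is just rescaling. So the freedom you invoke---adjoining further coordinates with arbitrarily small coefficients---is illusory, and it does not let you force the dominant coordinates of $\mathcal B_1$ and $\mathcal B_2$ to be disjoint. Nor does the product of two norm-one projections help without a reason for them to commute. What one actually needs is that $V_1\cap V_2$ is $1$-complemented in $\ell^\infty(k)$, and the honest route is to produce \emph{directly} a special basis of $Y_1^\perp+Y_2^\perp$ (equivalently, to verify Baronti's condition for the sum), using only linear combinations within that fixed subspace of $\ell^1(k)$; this is a finite-dimensional computation with Baronti's criterion, not a perturbation argument. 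You have correctly isolated this as the crux, but the mechanism you propose for resolving it does not work as written.
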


In the next theorem, we show that the property of \textquoteleft finite intersection of strongly proximinal hyperplanes being strongly proximinal' is preserved under $c_0$-direct sums. We note that this property is equivalent to saying that  \textquoteleft intersection of kernels of  finitely many SSD-points is a strongly proximinal subspace'.

\begin{thm} Let $\{X_i\}_{i \in I}$ be a family of Banach spaces such that for every $X_i$ and for any collection of finitely many \textnormal{SSD}-points of $X_i^*$, the intersection of their kernels is a strongly proximinal subspace of $X_i$. Then the same conclusion holds for $X = \bigoplus_{c_0} X_i $.
%
%
\end{thm}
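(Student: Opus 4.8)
The plan is to pass to the dual identification $X^{\ast}\cong\bigl(\bigoplus_{c_0}X_i\bigr)^{\ast}=\bigoplus_1 X_i^{\ast}$, to pin down the structure of the SSD-points of an $\ell_1$-direct sum, and then to transfer coordinatewise strong proximinality to strong proximinality in $X$ by means of Corollary~\ref{spinc}. For $\phi\in X^{\ast}$ I write $\phi=(\phi^{(i)})_{i\in I}$ with $\phi^{(i)}\in X_i^{\ast}$ and $\|\phi\|=\sum_i\|\phi^{(i)}\|$, and set $\supp\phi=\{i:\phi^{(i)}\neq 0\}$. What we must prove is that for any SSD-points $\phi_1,\dots,\phi_m$ of $X^{\ast}$, the subspace $\bigcap_{r=1}^m\ker\phi_r$ is strongly proximinal in $X$.

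\emph{Step 1 (structure of SSD-points of $X^{\ast}$).} I would first show that every SSD-point $\phi$ of $X^{\ast}$ has finite support and that each nonzero $\phi^{(i)}$ is an SSD-point of $X_i^{\ast}$. Since $\phi$ attains its norm (by \cite{MR1216708}), say at $(y_i)\in S_X$, the chain $\|\phi\|=\sum_i\phi^{(i)}(y_i)\le\sum_i\|\phi^{(i)}\|\,\|y_i\|\le\sum_i\|\phi^{(i)}\|=\|\phi\|$ forces $\|y_i\|=1$ for every $i\in\supp\phi$; as only finitely many $\|y_i\|$ exceed $\tfrac12$ in a $c_0$-sum, $\supp\phi$ is finite. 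If $\supp\phi=\{i_1,\dots,i_p\}$, then $\phi$ lies in the $L$-summand $U:=X_{i_1}^{\ast}\oplus_1\cdots\oplus_1 X_{i_p}^{\ast}$ of $X^{\ast}$, and applying \cite{MR2308830}*{Lemma~3.7}---first to the decomposition of $X^{\ast}$ into $U$ and the $\ell_1$-sum of the remaining $X_i^{\ast}$, then repeatedly inside the finite $\ell_1$-sum $U$---shows that each $\phi^{(i_j)}$ is an SSD-point of $X_{i_j}^{\ast}$.

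\emph{Step 2 (an auxiliary lemma on $\ell_\infty$-sums).} I would establish that if $W=W_1\oplus_\infty\cdots\oplus_\infty W_p$ and each $V_j\subset W_j$ is strongly proximinal, then $V_1\oplus_\infty\cdots\oplus_\infty V_p$ is strongly proximinal in $W$; by induction this reduces to $p=2$. For $w=(w_1,w_2)$ one has $d(w,V_1\oplus_\infty V_2)=\max\{d(w_1,V_1),d(w_2,V_2)\}=:d_0$, and $P_{V_1\oplus_\infty V_2}(w)$ consists of the pairs $(u_1,u_2)$ with $u_j\in V_j$ and $\|w_j-u_j\|\le d_0$. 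Given a minimizing sequence $(y_1^n,y_2^n)$, the coordinate realizing $d_0$, say the first, satisfies $\|w_1-y_1^n\|\to d_0=d(w_1,V_1)$, so strong proximinality of $V_1$ yields, along a subsequence, $u_1^k\in P_{V_1}(w_1)$ with $\|y_1^{n_k}-u_1^k\|\to 0$; for the second coordinate one obtains $u_2^k\in V_2$ with $\|w_2-u_2^k\|\le d_0$ and $\|y_2^{n_k}-u_2^k\|\to0$, using strong proximinality of $V_2$ directly when $d(w_2,V_2)=d_0$ and otherwise a short convex-combination correction with a fixed element of $P_{V_2}(w_2)$ to re-enter the closed ball of radius $d_0$ about $w_2$. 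Then $(u_1^k,u_2^k)\in P_{V_1\oplus_\infty V_2}(w)$ and $\|(y_1^{n_k},y_2^{n_k})-(u_1^k,u_2^k)\|\to 0$. I expect Step~2 to require the only genuine care, the point being that a minimizing sequence for the $\ell_\infty$-sum need not be minimizing in the non-dominant coordinate, so the $\ell_\infty$ slack must be absorbed.

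\emph{Step 3 (assembly).} Let $\phi_1,\dots,\phi_m$ be SSD-points of $X^{\ast}$. By Step~1 each $\supp\phi_r$ is finite; put $S=\bigcup_r\supp\phi_r$, say $S\subseteq\{1,\dots,N\}$, and write $X=X_1\oplus_\infty\cdots\oplus_\infty X_N\oplus_\infty Z$ with $Z=\bigoplus_{c_0,\,i>N}X_i$. For $1\le i\le N$ set $Y_i=\bigcap_{r=1}^m\ker\phi_r^{(i)}$; discarding the $r$ with $\phi_r^{(i)}=0$ (whose kernel is $X_i$, so that $Y_i=X_i$ if all of them vanish), Step~1 shows $Y_i$ is an intersection of kernels of finitely many SSD-points of $X_i^{\ast}$, whence $Y_i$ is strongly proximinal in $X_i$ by hypothesis. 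By Step~2, $Y_0:=Y_1\oplus_\infty\cdots\oplus_\infty Y_N\oplus_\infty Z$ is strongly proximinal in $X$. Each $\phi_r$ depends only on the coordinates in $S$ and hence vanishes on $Y_0$, so $Y_0\subseteq Y:=\bigcap_{r=1}^m\ker\phi_r\subseteq X$; moreover $Y_0$ has finite codimension $\sum_{i\le N}\dim(X_i/Y_i)$ in $X$. Therefore Corollary~\ref{spinc} applies and $Y$ is strongly proximinal in $X$, which is precisely the conclusion for $X=\bigoplus_{c_0}X_i$.
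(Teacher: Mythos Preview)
Your proof is correct and follows essentially the same strategy as the paper's: identify the finite support and componentwise SSD-structure of SSD-points in the $\ell_1$-sum dual, sandwich the kernel intersection between $X$ and a finite-codimensional coordinatewise product of strongly proximinal subspaces, and invoke Corollary~\ref{spinc}. The paper cites \cite{MR3196182}*{Theorem~3.10} for your Step~1 and asserts the $\ell_\infty$-sum lemma of your Step~2 without proof, whereas you supply direct arguments for both (the norm-attainment argument for finite support and the convex-combination correction in the non-dominant coordinate), making your version more self-contained.
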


\begin{proof}
	We first prove the case when $I=\{1,2\}$. Let $F_1, F_2\in X^*$ be \textnormal{SSD}-points of $X^*= X_1^*\bigoplus_1 X_2^*$. Then $F_1=(F_1(1), F_1(2))$ and $F_2=(F_2(1), F_2(2))$, where $F_1(i)$ and $F_2(i)$ are either \textnormal{SSD}-points of $X_i^*$ or $0$  for $i=1,2$.
	
	Now consider the case where all the functionals are non-zero. We can  easily see that $$
		 \big(\ker(F_1(1))\cap \ker(F_2(1))\big)\sideset{}{_\infty}\bigoplus \big( \ker(F_1(2))\cap \ker(F_2(2))\big)\subset \ker(F_1)\cap \ker(F_2)\subset X.$$
		Since  $\ell_\infty$-sum of finitely many strongly proximinal subspaces is strongly proximinal, it is easy to see using the hypothesis that $\big(\ker(F_1(1))\cap \ker(F_2(1))\big)\sideset{}{_\infty}\bigoplus \big( \ker(F_1(2))\cap \ker(F_2(2))\big)$ is a finite co-dimensional strongly proximinal subspace of $X$ and hence, by Corollary~\ref{spinc}, $\ker(F_1)\cap \ker(F_2)$ is strongly proximinal in $X$.
		
		Similar arguments can be used to prove other cases and also to prove the case of finitely many SSD-points of $X^*$.
		
		Now let $F_1,\ldots, F_k\in X^*=\displaystyle{\sideset{}{_1}\bigoplus X_i^*} $ be SSD-points of $X^*$. Then, by \cite{MR3196182}*{Theorem~3.10}, there exists a finite subset $A$ of $I$ such that $F_i(j)=0$ for all $i=1,\ldots,k$ and $j \centernot \in A$. Then we may assume that $F_1,\ldots,F_k \in \displaystyle{\sideset{}{_1}\bigoplus_{i \in A}X_i^*}$.
		
		Then by arguments  similar to the ones given during the proof of the first case, we get a finite co-dimensional strongly proximinal subspace $Z$ of $\displaystyle{\sideset{}{_\infty}\bigoplus_{i \in A}X_i}$ such that 		
			\[
		 \left	 (Z\sideset{}{_\infty}\bigoplus \left(\sideset{}{_{c_0}}\bigoplus_{i \centernot\in A}X_i \right)\right) \subset \bigcap_{i=1}^k \ker(F_i)\subset X.
			\]	
	Hence, by Corollary~\ref{spinc}, $\bigcap_{i=1}^k \ker(F_i)$ is strongly proximinal in $X$.	
	\end{proof}

\subsection*{Acknowledgements}
The authors would like to thank  Journal of Convex Analysis for efficient handling of our article during the pandemic.

 \begin{bibdiv}
 	\begin{biblist}		
 		\bib{MR960402}{article}{
 			author={Baronti, Marco},
 			title={{Norm-one projections onto subspaces of {$l^\infty$}}},
 			date={1988},
 			ISSN={0003-889X},
 			journal={Arch. Math. (Basel)},
 			volume={51},
 			number={3},
 			pages={242\ndash 246},
 		}
 		\bib{MR2308830}{article}{
 			author={Dutta, S.},
 			author={Narayana, Darapaneni},
 			title={{Strongly proximinal subspaces of finite codimension in {$C(K)$}}},
 			date={2007},
 			ISSN={0010-1354},
 			journal={Colloq. Math.},
 			volume={109},
 			number={1},
 			pages={119\ndash 128},
 			url={http://dx.doi.org/10.4064/cm109-1-10},
  		}
 		
 		\bib{MR874950}{article}{
 			author={Feder, Moshe},
 			title={{On the sum of proximinal subspaces}},
 			date={1987},
 			ISSN={0021-9045},
 			journal={J. Approx. Theory},
 			volume={49},
 			number={2},
 			pages={144\ndash 148},
 			url={https://doi.org/10.1016/0021-9045(87)90084-0},
 		}
 		
 		\bib{MR1216708}{article}{
 			author={Franchetti, Carlo},
 			author={Pay{\'a}, Rafael},
 			title={{Banach spaces with strongly subdifferentiable norm}},
 			date={1993},
 			journal={Boll. Un. Mat. Ital. B (7)},
 			volume={7},
 			number={1},
 			pages={45\ndash 70},
 		}
 		
 		\bib{MR1851725}{article}{
 			author={Godefroy, G.},
 			author={Indumathi, V.},
 			title={{Strong proximinality and polyhedral spaces}},
 			date={2001},
 			ISSN={1139-1138},
 			journal={Rev. Mat. Complut.},
 			volume={14},
 			number={1},
 			pages={105\ndash 125},
 		}
 		
 		\bib{MR1238713}{book}{
 			author={Harmand, P.},
 			author={Werner, D.},
 			author={Werner, W.},
 			title={{{$M$}-ideals in {B}anach spaces and {B}anach algebras}},
 			series={{Lecture Notes in Mathematics}},
 			publisher={Springer-Verlag},
 			address={Berlin},
 			date={1993},
 			volume={1547},
 			ISBN={3-540-56814-X},
 		}
 		
 		\bib{MR2262920}{article}{
 			author={Indumathi, V.},
 			author={Lalithambigai, S.},
 			title={{A new proof of proximinality for {$M$}-ideals}},
 			date={2007},
 			ISSN={0002-9939},
 			journal={Proc. Amer. Math. Soc.},
 			volume={135},
 			number={4},
 			pages={1159\ndash 1162},
 			url={https://doi.org/10.1090/S0002-9939-06-08701-6},
 		}
 		
 		\bib{MR3196182}{article}{
 			author={Jayanarayanan, C.~R.},
 			title={{Proximinality properties in {$L_p(\mu,X)$} and polyhedral direct
 					sums of {B}anach spaces}},
 			date={2014},
 			ISSN={0163-0563},
 			journal={Numer. Funct. Anal. Optim.},
 			volume={35},
 			number={6},
 			pages={708\ndash 723},
 			url={http://dx.doi.org/10.1080/01630563.2013.841195},
 		}
 		
 		\bib{MR3314889}{article}{
 			author={Jayanarayanan, C.~R.},
 			author={Paul, Tanmoy},
 			title={{Strong proximinality and intersection properties of balls in
 					{B}anach spaces}},
 			date={2015},
 			ISSN={0022-247X},
 			journal={J. Math. Anal. Appl.},
 			volume={426},
 			number={2},
 			pages={1217\ndash 1231},
 			url={http://dx.doi.org/10.1016/j.jmaa.2015.01.013},
 		}
 		
 		\bib{MR0493279}{book}{
 			author={Lacey, H.~Elton},
 			title={{The isometric theory of classical {B}anach spaces}},
 			publisher={Springer-Verlag},
 			address={New York},
 			date={1974},
 			note={Die Grundlehren der mathematischen Wissenschaften, Band 208},
 		}
 		
 		\bib{MR1000866}{article}{
 			author={Lin, Pei-Kee},
 			title={{A remark on the sum of proximinal subspaces}},
 			date={1989},
 			ISSN={0021-9045},
 			journal={J. Approx. Theory},
 			volume={58},
 			number={1},
 			pages={55\ndash 57},
 			url={https://doi.org/10.1016/0021-9045(89)90007-5},

}
 		
 		\bib{MR3904701}{article}{
 			author={Rao, T. S. S. R.~K.},
 			title={{Points of strong subdifferentiability in dual spaces}},
 			date={2018},
 			ISSN={0362-1588},
 			journal={Houston J. Math.},
 			volume={44},
 			number={4},
 			pages={1221\ndash 1226},
 		}
 		
 		\bib{MR1157815}{book}{
 			author={Rudin, Walter},
 			title={{Functional analysis}},
 			edition={Second},
 			series={{International Series in Pure and Applied Mathematics}},
 			publisher={McGraw-Hill, Inc., New York},
 			date={1991},
 			ISBN={0-07-054236-8},
 		}

 			\bib{MR587395}{article}{
 				author={Vlasov, L.~P.},
 				title={{Approximate properties of subspaces of finite codimension in
 						{$C(Q)$}}},
 				date={1980},
 				ISSN={0025-567X},
 				journal={Mat. Zametki},
 				volume={28},
 				number={2},
 				pages={205\ndash 222, 318},
 			}

 	\end{biblist}
 \end{bibdiv}
\end{document}